\newcommand{\XT}{X_T}
\newcommand{\XTd}{X^{\delta}_T}
\newcommand{\XZ}{X^\Z}
\newcommand{\barrH}{\bar{\rho}^H}
\newcommand{\meas}{\mathcal{M}}
\newcommand{\MT}{\meas_T}
\newcommand{\MTX}{\meas_T(X)}
\newcommand{\MSeXZ}{\meas^e_S(X^\Z)}
\newcommand{\MS}{\meas_S}
\newcommand{\MSe}{\meas_S^e}
\newcommand{\trho}{\tilde{\rho}}
\newcommand{\rB}{\rho_B}
\newcommand{\crB}{\trho}
\newcommand{\rBH}{\rho_B^H}
\newcommand{\rZ}{\pi}%{\rho^\Z}
\newcommand{\brZ}{\bar{\rZ}}
\newcommand{\brZH}{\brZ^H}
\newcommand{\rZB}{\rZ_B}
\newcommand{\hatrB}{\hat\rho_B}
\newcommand{\hatrBZ}{\hat{\rZ}_B}
\newcommand{\rZBH}{\rZ_B^H}
\newcommand{\dbar}{\bar d}
\newcommand{\cs}{\mathrm{C}^*}
\newcommand{\Ell}{\mathrm{Ell}}
\newcommand{\tr}{\mathrm{Tr}}
\newcommand{\N}{\mathbb{N}}
\newcommand{\Z}{\mathbb{Z}}
\newcommand{\R}{\mathbb{R}}
\newcommand{\eps}{\varepsilon}
\DeclareMathOperator{\diam}{diam}
\DeclareMathOperator{\proj}{proj}
\newcommand{\projX}{\proj^1}
\newcommand{\projY}{\proj^2}
\newcommand{\projZ}{\proj^i}
\newtheorem{thm}{Theorem}[section] % 1st argument is your name for it
\newtheorem{lemma}[thm]{Lemma}     % 2nd argument is what is printed
\newtheorem{cor}[thm]{Corollary}
\newtheorem{prop}[thm]{Proposition}
\theoremstyle{definition}
\newtheorem{defn}[thm]{Definition}
\title{Even the vague specification property implies density of ergodic measures}
\author{Damla Bulda\u{g}}
\address{Faculty of Mathematics and Computer Science, Jagiellonian University in Krakow, ul. {\L}ojasiewicza 6, 30-348 Krak\'{o}w, Poland}
\email{damla.buldag.db@gmail.com}
\author{Bhishan Jacelon}
\address{
Institute of Mathematics of the Czech Academy of Sciences\\ \v{Z}itn\'{a} 25 \\115 67 Prague 1\\Czech Republic}
\email{jacelon@math.cas.cz}
\author{Dominik Kwietniak}
\address{Faculty of Mathematics and Computer Science, Jagiellonian University in Krakow, ul. {\L}ojasiewicza 6, 30-348 Krak\'{o}w, Poland}
\email{dominik.kwietniak@uj.edu.pl}
\date{\today}
\begin{document}

\begin{abstract}
We prove that if a topological dynamical system $(X,T)$ is surjective and has the vague specification property, then its ergodic measures are dense in the space of all invariant measures. The vague specification property generalises Bowen's classical specification property and encompasses the majority of the extensions of the specification property introduced so far.

The proof proceeds by first considering the natural extension $\XT$ of $(X,T)$ as a subsystem of the shift action on the space $X^\Z$ of $X$-valued biinfinite sequences. We then construct a sequence of subsystems of $X^\Z$ that approximate $\XT$ in the Hausdorff metric induced by a metric compatible with the product topology on $X^\Z$. The approximating subsystems consist of $\delta$-chains for $\delta$ decreasing to $0$. We show that chain mixing implies that each approximating system possesses the classical periodic specification property. Furthermore, we use vague specification to prove that our approximating subsystems of $X^\Z$ converge to $\XT$ in the Hausdorff metric induced the Besicovitch pseudometric. It follows that the simplices of invariant measures of these subsystems of $\delta$-chains converge to the simplex of invariant measures of $\XT$ with respect to a generalised version of Ornstein's $\bar{d}$ metric. What is more, the density of ergodic measures is preserved in the limit. The proof concludes by observing that the simplices of invariant measures for $\XT$ and $(X,T)$ coincide.

The approximation technique developed in this paper appears to be of independent interest. % beyond the main result.
\end{abstract}

\maketitle

\section{Introduction}

%Our understanding of properties implying that a topological dynamical system has dense ergodic measures evolved allowing us to prove that the phenomenon holds for increasingly general families of systems.

The problem of detecting density of ergodic measures among invariant measures of a topological dynamical system has a rich history dating back at least to 1939. That year, Jean Ville published in his Ph.D. thesis \cite{Ville} a result that in modern terminology says that ergodic measures are dense among invariant measures for the binary full shift. In 1961, Poulsen \cite{Poulsen} constructed the first explicit example of an infinite-dimensional Choquet simplex (an infinite-dimensional generalisation of the well-known $n$-dimensional simplex, which is the closed convex hull of the origin and canonical basis of $\R^n$), whose extreme points form a dense subset. An infinite convex set with this property seems to be quite remarkable and counterintuitive. 
In a landmark 1978 paper, Lindenstrauss, Olsen, and Sternfeld \cite{LOS78} proved that such a Choquet simplex is essentially unique: Any non-trivial (not one-point) simplex with dense extreme points must be affinely homeomorphic to the simplex discovered by Poulsen. Their work also revealed another remarkable property of the Poulsen simplex: every Choquet simplex appears as a face (closed convex hull of some subset of extreme points) of the Poulsen simplex, highlighting its role as a universal object in convex geometry. Hence, the result of Ville says that the simplex of invariant measures of the full binary shift is also an example of the Poulsen simplex.

The density of ergodic measures among all invariant measures has several important implications for topological dynamical systems. This property is crucial in various proofs, for example, when we study entropy and pressure of dynamical systems (see \cite{GK,KKK}). The first proofs of the density of ergodic measures, such as those given by Parthasarathy \cite{Par61} and Sigmund \cite{Sig70,Sig74} relied heavily on the specification property as the primary tool. 

The class of topological dynamical systems with the specification property includes: topologically mixing shifts of finite type, topologically mixing interval maps, and Axiom A diffeomorphisms restricted to mixing basic sets. However, there are many systems  (including diffeomorphisms and symbolic systems, such as $\beta$-shifts and $S$-gap shifts), without the specification property (see \cite{Dat90,KLO16,PS05}). 

%The connection back to dynamical systems was strengthened by Downarowicz's work in 1991, who proved that every Choquet simplex is affinely homeomorphic to the space of invariant measures for some minimal dynamical system. 

These limitations sparked the development of more sophisticated approaches. Mathematicians began exploring generalisations such as the weak specification property \cite{Marcus,Dat81,Dat83} and the almost specification property introduced by Pfister and Sullivan \cite{PS05}. 
In \cite{GK}, the authors (inspired by Coudene and Shapira \cite{CS}) introduced the properties of closability and linkability, providing a unified framework that encompasses an even broader class of topological dynamical systems while still yielding the crucial density result.
These properties properly generalise the periodic specification property. The weak specification property is satisfied by many systems such as all endomorphisms of compact Abelian groups for which the Haar measure is ergodic \cite{Dat90}. The almost specification property holds for any $\beta$-shift (see \cite{PS05}). It is known that neither the weak specification property implies the almost specification nor the almost specification property implies the weak specification property, see \cite{KLO17,KOR, Pavlov}. However, each of these properties implies the asymptotic average shadowing property; see \cite{KLO17}. The asymptotic average shadowing property was introduced by Gu \cite{Gu} and it turns out (see \cite{CKKK,CT} and below) that the asymptotic average shadowing property is still more general than the variants of the specification property mentioned so far. See Figure~\ref{fig:venn} for an illustration of dependencies. Each region of the diagram corresponds to a nonempty family of topological dynamical systems. As, up to this point, the evolution of approaches to density of ergodic measures followed a path from specification to more general properties of specification-type it is natural to ask if the ergodic measures are dense for a topological dynamical system with the asymptotic average shadowing property. The conjecture sounds plausible, especially in light of the fact that another result of Sigmund \cite{Sig74} saying that every invariant measure has a generic point in a topological dynamical system with the specification property has been successfully extended for systems with all the specification-like properties mentioned in Figure~\ref{fig:venn} (see \cite{Dat81,Dat83,PS05}), including the asymptotic average shadowing property (independently, in \cite{DTY, Kamae, KLO17}), see also \cite{CS,GK}. What is more, by some new developments the asymptotic average shadowing property turned out to be equivalent to the vague specification property introduced 45 years earlier by Kamae. Kamae  introduced the property in \cite{Kamae} and used it to prove results about lifting generic points. He used it to  study normal sequences and normality-preserving subsequences. Similar results were proved recently by Downarowicz and Weiss \cite{DW} under the assumption of the weak specification property. The question about the relation between the vague specification property and the weak specification property raised in \cite{DW} was resolved by Can and Trilles \cite{CT} who showed that the weak specification property implies the vague specification propety. But first, Can and Trilles showed that the vague specification property is equivalent to the asymptotic average shadowing property introduced independently by Gu \cite{Gu}. Using this equivalence Can and Trilles argued in \cite{CT} that the implication between the weak specification property and the vague specification property and non-equivalence of these properties follow from known results about the asymptotic average shadowing property \cite{CT,KLO17, Pavlov}. 

Unfortunately, for systems with the asymptotic average shadowing property (the vague specification property), it is impossible to follow the footsteps of Sigmund to prove density of ergodic measures. This is because the systems with the average shadowing property can be minimal (so they have no notrivial subsystems), while the weak specification property and the almost specification property implies the existence of many pairwise disjoint invariant subsets, which are used in proofs as supports of ergodic invariant measures approximating given non-ergodic measure (see \cite{Dat81,Dat83,KOR,PS05}). 

This obstacle has also been recently removed, but only for symbolic systems (shift spaces). In \cite{KKK}, the authors proposed an approximation scheme for shift spaces that provides a new method for proving entropy density of ergodic measures (a property stronger than just density of ergodic measures among all invariant ones). The method extends the list of shift spaces with entropy-dense (hence just dense) ergodic measures beyond previously known cases. It works by approximating  symbolic systems (shifts) with simpler ones while carefully controlling a certain pseudometric between the final system and its approximations.
This works as follows: for every shift space $X$, there is a canonical sequence of Markov approximations $X_n^M$ where each $X_n^M$ is a shift of finite type. The languages of $X_n^M$ are defined so that they match the language of $X$ for words up to length $n+1$. As $n$ increases, these approximations $X_n^M$ converge to $X$ in the sense of the Hausdorff metric induced on the compact subsets of the full shift by the usual (product) metric. The authors of \cite{KKK} observed that there is an abstract property of the shift space $X$, coined \emph{$\dbar$-shadowing} that guarantees that Markov approximations $X_n^M$ converge to $X$ in a stronger sense, the Hausdorff pseudometric $\dbar^H$ for subsets of the full shift induced by the $\dbar$ pseudometric on the full shift.
Then the authors of \cite{KKK} went on to prove that if the sequence $X^M_n$ converges to $X$ in the $\dbar$ pseudometric, then the corresponding spaces of invariant measures also converge.

Furthermore, the approximating spaces' ergodic measures converge to the limit space's ergodic measures. The convergence of simplices of invariant measures is in the sense of the metric $\dbar^H$, which is  Ornstein's metric $\dbar$ extended to subsets of invariant measures using the Hausdorff construction. The entropy function is continuous under this convergence. This allows one to transfer entropy density: if a sequence of shift spaces with entropy-dense ergodic measures converges to $X$ in the $\dbar^H$ sense, then $X$ also has entropy-dense ergodic measures. 
The power of this approach is that it allows proving entropy density for new classes of systems, including hereditary $\mathscr{B}$-free shifts and some minimal or proximal systems \cite{CKKK} that were previously out of reach of existing techniques.

In this paper, we would like to mimic the approach developed for shift spaces in \cite{KKK}. It turns out that many tools and pieces are already available in the literature, but we still need to construct analogous approximations for general topological dynamical systems and put together some seemingly unrelated pieces of theory. Note that for a topological dynamical system $(X,T)$ where $X$ is a compact metric space and $T$ is a continuous surjection, potential analogues of Markov approximations should have three key properties:
Each approximation should be a topological dynamical system $(X_n,T_n)$ with easily verifiable properties. We concentrate on the density of ergodic measures.  
The approximations should form a sequence that converges to the original system in some suitable (pseudo) metric. The pseudometric should pass the density of ergodic measures from approximations to the limit.

In order to find such approximations, we first pass to the natural extension $\XT$ of $(X,T)$. We regard $\XT$ as a subsystem of the shift action on the space $X^\Z$ of $X$-valued biinfinite sequences. We define our approximations to be subsystems of $X^\Z$ that converge to $\XT$ in the Hausdorff metric induced by the metric compatible with the product topology on $X^\Z$. For $\delta>0$ we choose the approximating subsystems $\XTd$ to be built from biinfinite $\delta$-chains for $T$, and then we let $\delta$ decrease to $0$. It turns out that chain mixing of $(X,T)$ implies that each approximating system $\XTd$ possesses the classical periodic specification property. In particular, the ergodic measures are dense in the simplex of shift-invariant measures concentrated on $\XTd$. Furthermore, we use the vague specification property to prove that our approximating subsystems of $X^\Z$ converge to $\XT$ in the Hausdorff metric induced by the Besicovitch pseudometric. It follows that the simplices of invariant measures of these subsystems of $\delta$-chains converge to the simplex of invariant measures of $\XT$ with respect to a generalised version of Ornstein's $\bar{d}$ metric. What is more, the density of ergodic measures is preserved in the limit. The proof concludes by observing that the simplices of invariant measures for $\XT$ and $(X,T)$ coincide.

We hope that the approximation technique developed in this paper is of independent interest. 
We also note that there are consequences in the field of operator algebras. It is well known that a homeomorphism $T\colon X\to X$ provides a dual action of integers on the space $C(X)$ of continuous functions $X\to\mathbb{C}$, which is encoded by the crossed product $\cs$-algebra $A=C(X)\rtimes_{T}\mathbb{Z}$. If $T$ is minimal, then $A$ is simple, and if in addition $X$ has finite Lebesgue covering dimension, then $A$ is included among the $\cs$-algebras classified by the Elliott invariant $\Ell$ (see \cite{Carrion:wz}). Computation of $\Ell(A)$ (and therefore the $\cs$-isomorphism class of $A$) entails, in part, knowledge of the Choquet simplex $\tr(A)$ of tracial states $A\to\mathbb{C}$. (In fact, by the classification presented in \cite{Elliott:2020wc} and with $\mathcal{W}$ the $\cs$-algebra constructed in \cite{Jacelon:2010fj}, $\tr(A)$ determines the `$\mathcal{W}$-stable' isomorphism class of $A$, that is, the isomorphism class of $A\otimes\mathcal{W}$.) If $T$ acts freely on $X$, then $\tr(A)$ coincides with $\MTX$ (see \cite[Chapter 11]{Giordano:2018wp}), but the systems of interest to us in the present article are of course not free. Whether minimality can be arranged is also not obvious. That said, by discarding the commutative $\cs$-algebra $C(X)$ for a suitable noncommutative (but Elliott-classifiable) one, and replacing invertibility, minimality and freeness of $T$ by `finite Rokhlin dimension' (see \cite{Hirshberg:2015wh,Szabo:2019te}), one can associate to $(X,T)$ an Elliott-classifiable `quantum' crossed product whose tracial state space is $\MTX$ (see \cite{Jacelon:2024aa}). A consequence of our main theorem is that there are exactly two $\mathcal{W}$-stable isomorphism classes for such quantum crossed products associated to systems with the vague specification property, one represented by $\mathcal{W}$ itself and the other by the tensor product of $\mathcal{W}$ with an approximately finite-dimensional $\cs$-algebra whose trace space is the Poulsen simplex.

\subsection*{Acknowledgements} DB was supported by the National Science Centre (NCN), Poland under Sonata grant no. 2022/47/D/ST1/02211. 
BJ was supported by the Czech Science Foundation (GA\v{C}R) project 22-07833K and the Institute of Mathematics of the Czech Academy of Sciences (RVO: 67985840). DK was supported by the National Science Centre (NCN), Poland
 under the Weave-UNISONO call in the Weave programme [grant no
 2021/03/Y/ST1/00072].

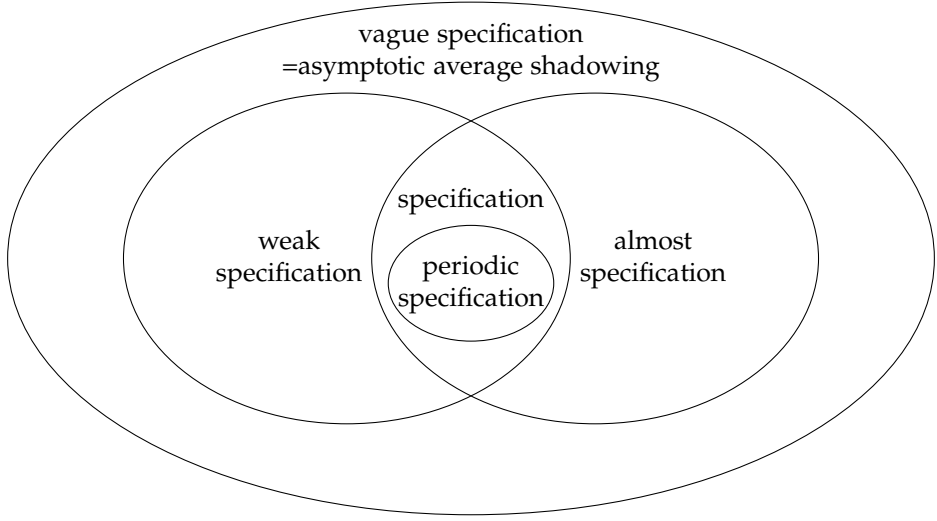
\begin{figure}
    \centering
\begin{tikzpicture}[scale=1.1]
    % Define styles for the shapes
    \tikzstyle{oval}=[ellipse, draw, minimum width=40pt, minimum height=20pt]
    
    % Draw the outer oval (asymptotic average shadowing = vague specification)
    \draw [draw] (0,0) ellipse [x radius=5.6cm, y radius=3.1cm];
    \node at (0,2.5) {\begin{tabular}{c}vague specification\\=asymptotic average shadowing  \end{tabular}};
    
    % Draw the two intersecting ovals (weak and almost specification)
    \draw [draw] (-1.5,0) ellipse [x radius=2.7cm, y radius=2cm];
    \draw [draw] (1.5,0) ellipse [x radius=2.7cm, y radius=2cm];
    
    % Labels for the two main intersecting ovals
    \node at (-2.2,0) {\begin{tabular}{c}weak\\specification\end{tabular}};
    \node at (2.2,0) {\begin{tabular}{c}almost\\specification\end{tabular}};
    
    % Label for the intersection
    \node at (0,0.7) {specification};
    
    % Draw and label the smallest oval (periodic specification)
    \draw [draw] (0,-0.3) ellipse [x radius=1cm, y radius=0.7cm];
    \node at (0,-0.3) {\begin{tabular}{c}periodic\\specification\end{tabular}};
\end{tikzpicture}    
    \caption{Diagram illustrating dependencies between various specification like properties discussed in the paper.}
    \label{fig:venn}
\end{figure}

\section{Definitions and notation}

%$X^\Z$
%Define the space $\XT$, $\XTn$ $\XTd$

Let $(X,\rho)$ be a compact metric space and $T\colon X\rightarrow X$ be a continuous surjective map. If necessary, we replace $\rho$ by an equivalent metric given by $\min(1,\rho(x,y))$. Hence, without loss of generality, for the rest of the paper we assume $\diam(X)\le 1$, that is, $\rho(x,y)\le 1$ for $x,y\in X$. We call $(X,T)$ a \emph{topological dynamical system} or a \emph{tds} for short. 

We write $\meas(X)$ for the space of all Borel probability measures on $X$. We say that a $\mu\in\meas(X)$ is $T$-invariant if for every Borel set $B\subseteq X$ we have $\mu(T^{-1}(B))=\mu(B)$. 
We say that a $\mu\in\meas(X)$ is \emph{ergodic} if for every Borel set $E\subseteq X$ such that $T^{-1}(E)=E$ we have $\mu(E)\in\{0,1\}$.

We denote the set of all bi-infinite $X$-valued sequences by $X^{\Z}$, that is, 
\[X^{\Z}= \{(x_{k})_{k\in\Z}: \forall k\in\Z, x_{k}\in X \}.\] 
We equip $X^\Z$ with the product topology, which is compatible with the metric
\[
\rZ((x_{k}),(y_{k}))= \sup\{\min(\rho(x_k, y_k), 1/(|k|+1)) : k \in \mathbb{Z}\}.
\]
%where we agree that $\min(d,1/0)=d$ for any $d\ge 0$ by convention.
%product metric $\rZ((x_{k}),(y_{k})) = \sum_{k\in\Z}2^{-|k|}\rho(x_k,y_k)$. 
Note that for every $\eps>0$ and $x=(x_{k})_{k\in\Z},\ y=(y_{k})_{k\in\Z}\in X^\Z$ we have that $\rZ((x_{k}),(y_{k}))<\eps$ if and only if  
\begin{equation}\label{cond:dist-bound}
\rho(x_k,y_k)<\eps\text{ for all $k\in \Z$ such that }|k|+1\le \max(1,{1}/{\eps}).    
\end{equation}

Let $S$ be the (left) shift operator on the product space $X^\Z$ given for $x=(x_{k})_{k\in\Z}$ by
\[
S(x)=(S(x)_k)_{k\in\Z},\quad\text{where }
S(x)_k=x_{k+1}\text{ for every }k\in\Z.
\]
We call every nonempty, closed and $S$-invariant set $Y\subseteq X^\Z$ a \emph{subsystem} of $X^\Z$ or a \emph{subshift}. 
Given a topological dynamical system $(X,T)$ we define the space of all (bi-infinite) orbits  of $(X,T)$ to be the subset $\XT$ of $X^\Z$  containing all bi-infinite sequences $(x_{k})_{k\in\Z}$ such that $T(x_k)=x_{k+1}$ for every $k\in\Z$. Note that the surjectivity of $T$ guarantees that $\XT$ is not empty.
It is easy to see that $\XT$ is a  subsystem of %closed and $S$-invariant subset of 
$X^\Z$.

The \emph{natural extension of $(X,T)$} is the  topological dynamical system obtained by considering $\XT$ with the dynamics of the shift $S$. 
If $T$ is a homeomorphism, then the topological dynamical systems $(X,T)$ are $(\XT,S)$ are topologically conjugate (isomorphic). If $T$ is surjective but not invertible, then the natural extension $(\XT,S)$ of $(X,T)$ is the smallest invertible topological dynamical system that factors onto $(X,T)$.
%, because then we are able to extend our sequences backward, that is, given $x_0$ we can find $x_{-1}$ such that $T(x_{-1})=x_0$, etc. 
%Clearly, $\XT \subset X^\Z$. 

The following is folklore (cf. \cite{CliTho12}).

\begin{lemma} \label{lem:xtmeasures}
The simplices of invariant measures $\MT(X)$ of $(X,T)$ and $\MS(\XT)$ of $(\XT,S)$ are affinely homeomorphic.  
\end{lemma}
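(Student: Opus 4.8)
The plan is to construct mutually inverse affine homeomorphisms between $\MS(\XT)$ and $\MT(X)$ using the projection onto the $0$-th coordinate. Let $\projX \colon \XT \to X$ be the continuous map $\projX((x_k)_{k\in\Z}) = x_0$. Since $\projX \circ S = T \circ \projX$, the pushforward map $(\projX)_* \colon \MS(\XT) \to \MT(X)$, $\mu \mapsto \mu \circ (\projX)^{-1}$, is well-defined, affine, and continuous in the weak$^*$ topologies (pushforward by a continuous map is always weak$^*$-to-weak$^*$ continuous, and affinity is immediate from the definition of pushforward). It remains to produce a two-sided inverse.

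For the inverse, I would use the standard fact that the natural extension realises $\XT$ as the inverse limit $\varprojlim (X, T)$: a point of $\XT$ is determined by its nonnegative coordinates $(x_0, x_1, x_2, \dots)$ with $x_{k+1} = T(x_k)$, equivalently by the sequence $(x_0, x_{-1}, x_{-2}, \dots)$ of "pasts" with $T(x_{-k-1}) = x_{-k}$. Given $\nu \in \MT(X)$, one builds a shift-invariant measure on $\XT$ by specifying its values on cylinder sets: for a Borel set $B \subseteq X$ and $k \ge 0$, the cylinder $\{x \in \XT : x_{-k} \in B\}$ should get mass $\nu(B)$, and one checks this is a consistent family (consistency across different $k$ uses precisely $T$-invariance of $\nu$, since $\{x_{-k-1} \in T^{-1}B\} = \{x_{-k} \in B\}$ up to the identification, and $\nu(T^{-1}B) = \nu(B)$). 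By the Kolmogorov extension theorem (or the Hahn–Kolmogorov theorem on the algebra of cylinders, using compactness of $X$ for countable additivity) this determines a unique Borel probability measure $\hat\nu$ on $\XT$; shift-invariance of $\hat\nu$ follows because $S$ just relabels coordinates and the defining cylinder masses are translation-consistent. The assignment $\nu \mapsto \hat\nu$ is affine, and $(\projX)_*\hat\nu = \nu$ by construction. Conversely, for $\mu \in \MS(\XT)$ one has $\widehat{(\projX)_*\mu} = \mu$: both are shift-invariant measures on $\XT$, and by $S$-invariance of $\mu$ they agree on every cylinder $\{x_{-k} \in B\} = S^{-k}\{x_0 \in B\}$, hence on the generating algebra, hence everywhere. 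Thus $(\projX)_*$ is an affine bijection.

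Finally I would check continuity of the inverse $\nu \mapsto \hat\nu$; since both $\MS(\XT)$ and $\MT(X)$ are compact metrisable (weak$^*$) and $(\projX)_*$ is a continuous bijection between them, its inverse is automatically continuous, so $(\projX)_*$ is an affine homeomorphism. (Alternatively one can verify directly that if $\nu_n \to \nu$ weak$^*$ then $\hat\nu_n \to \hat\nu$ weak$^*$ by testing against functions depending on finitely many coordinates, which are dense in $C(\XT)$ by Stone–Weierstrass.)

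The main obstacle is the construction of $\hat\nu$ and the verification that it is a well-defined $\sigma$-additive Borel measure: this is where one must invoke an extension theorem and use compactness of $X$ to upgrade finite additivity on the cylinder algebra to countable additivity, and it is the only genuinely non-formal step. Everything else — affinity, the intertwining $\projX \circ S = T \circ \projX$, the cylinder computations showing the two maps are mutually inverse, and the automatic continuity of the inverse on compact metrisable spaces — is routine. This is precisely why the statement is flagged as folklore; I would keep the write-up brief and cite \cite{CliTho12} for the details of the inverse-limit measure construction.
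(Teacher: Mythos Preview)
The paper does not prove this lemma at all; it simply labels it folklore and cites \cite{CliTho12}. Your proposal supplies the standard argument (pushforward by the zeroth-coordinate projection, inverse built via Kolmogorov extension on cylinders indexed by the nonpositive coordinates, automatic continuity of the inverse from compactness), and your closing remark---keep the write-up brief and cite the reference---is exactly what the paper does. One small slip worth fixing: you assert that ``a point of $\XT$ is determined by its nonnegative coordinates $(x_0,x_1,\dots)$'', which is false when $T$ is non-injective (those coordinates are all determined by $x_0$ and carry no information about the past); you immediately switch to the correct description via $(x_0,x_{-1},x_{-2},\dots)$ and the rest of the argument uses only that, so no harm is done, but the sentence as written is misleading.
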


Now we would like to define the space of chains. Fix $\delta\geq 0$.
\begin{defn}
    We say that $(x_k)_{k\in\Z}\in X^\Z$ is a \emph{(biinfinite) $\delta$-chain} for $T$ if we have $\rho\left( T(x_k),x_{k+1} \right)\leq\delta$ for every $k\in \Z$. We write $\XTd$ to denote the set of all $\delta$-chains for $T$. Fix $x,y\in X$. A \emph{(finite) $\delta$-chain for $T$ (from $x$ to $y$)} is a sequence $(x_k)_{k=0}^M$ in $X$ such that $x_0=x$, $x_M=y$, and $\rho\left( T(x_k),x_{k+1} \right)\leq\delta$ for $0\le k<M$. 
\end{defn}
Obviously, every element of $\XT$ is a $0$-chain. Note that for every $\delta\ge 0$ the set $\XTd$ is a subsystem of %a topological dynamical system 
$(X^\Z,S)$. % which means that $\XTd$ is an $S$-invariant ($S(\XTd)=\XTd$) nonempty closed subset of $X^\Z$. 
Note also that our assumption on the diameter of $X$ means that $\XT^1=X^\Z$. % because for any sequence $(x_k)_{k\in\Z}\in X^\Z$ we have $\rho\left(T(x_k),x_{k+1}\right)\leq 1$.

%For $n\in \N$, we can consider $1/n$-chains. This will give us 
As a result, we obtain a sequence of approximations of $\XT$ given by a decreasing sequence of subsystems. The $n$th element of this sequence is the subsystem formed by all $1/n$-chains ($n=1,2,\ldots$), that is
\[
    \XT=\XT^0 \subseteq \ldots \subseteq \XT^{1/n}\subseteq \ldots \subseteq \XT^{1/2} \subseteq \XT^1=X^\Z.
\]
 In addition, we have \[\XT=\bigcap\limits_{n=1}^\infty \XT^{1/n}.\]
%\begin{prop}
%For every $\delta>0$ the topological dynamical system has the shadowing property.
%\end{prop}
%\begin{proof}
%Fix $\delta>0$. Pick $\eps>0$. Let $(x^{(n)})_{n=1}^\infty\subseteq\XTd$ be a positive $\Delta$-chain for $S$. Therefore for every $n\ge 1$
%\[
%\rZ((x_k^{(n)})_{k\in\Z},(x_{k+1}^{(n+1)})_{k\in\Z})\le \Delta.
%\]
%\end{proof}
\begin{defn}
    We say that $(X,T)$ is \emph{chain mixing} if for every $\delta>0$ there exists $M\geq 1$ such that for every $x,y\in X$ and for every $n\geq M$ there exists a finite $\delta$-chain $(z_i)_{i=0}^n$ from $z_0=x$ to $z_n=y$.
\end{defn}

%For every metric $\rho$ on a space $X$, the following metric generates the same topology and is bounded by $1$. For $x,y \in X$ we set
%\[
%    \rho_1(x,y)=\min\left\{ 1,\rho(x,y) \right\}.
%\]
%Hence, $\rho_1$ is a bounded metric whatever the original metric was, $\rho_1$ generates the same topology. Therefore, we always assume that $\diam X \leq 1$.
%
\begin{defn}
    For $\delta>0$ we say that a sequence $(x_j )_{j=0}^\infty$ is a \emph{$\delta$-pseudo-orbit} for $T$ if $\rho\left(T(x_j),x_{j+1}\right)<\delta$ for all $j\geq 0$. 
\end{defn}
\begin{defn}
    We say that $(X,T)$ has the \emph{shadowing property} (or the \emph{pseudo-orbit tracing property}) if for every $\eps>0$ there exists $\delta>0$ such that for every $\delta$-pseudo-orbit $(x_j)_{j=0}^\infty\subset X$ there is a point $z\in X$ with $\rho\left( T^j(z), x_j \right)<\eps$ for every $j\geq 0$. In addition, we say that the pseudo-orbit $(x_j )_{j=0}^\infty$ is \emph{$\eps$-traced by $z$}. 
\end{defn}
\begin{defn}
    A sequence $(x_j)_{j=0}^\infty\subset X$ is called a \emph{$\delta$-average pseudo-orbit} if there exists $N\in \N$ such that for each $n\geq N$ and $k\in \N_0$ we have 
    \begin{equation}
        \dfrac{1}{n}\sum_{j=0}^{n-1}\rho\left( T(x_{j+k}),x_{j+k+1}\right)<\delta.
    \end{equation}
\end{defn}
\begin{defn}
    A topological dynamical system $(X,T)$ has the \emph{average shadowing property} if for every $\eps>0$ there exists $\delta>0$ such that for every $\delta$-average pseudo-orbit $(x_j)_{j=0}^\infty\subset X$ there is a point $z\in X$ that satisfies 
    \begin{equation}
        \limsup_{n\rightarrow\infty}\dfrac{1}{n}\sum_{j=0}^{n-1}\rho\left( T^j(z), x_j\right)<\eps.
    \end{equation}
    In this case, we say that the pseudo-orbit $(x_j)_{j=0}^\infty$ is \emph{$\eps$-traced on average by $z$}.
\end{defn}
\begin{thm}[Thm. 3.6 in {\cite{KKO14}}]
    For a chain mixing topological dynamical system $(X,T)$, the following conditions are equivalent:
    \begin{enumerate}
        \item $(X,T)$ has the average shadowing property,
        \item for every $\varepsilon > 0$ there is $\delta > 0$ such that for every $\delta$-pseudo-orbit $(x_i)_{i=0}^{\infty}$ there is a point $z\in X$ such that
        \[ \limsup_{n\rightarrow\infty}\dfrac{1}{n}\sum_{j=0}^{n-1}\rho\left(T^j(z),x_j \right)<\eps.\]
    \end{enumerate}
\end{thm}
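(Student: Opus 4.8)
The plan is to handle the two implications separately; chain mixing enters only in one of them. For (1)$\Rightarrow$(2) there is essentially nothing to prove: every $\delta$-pseudo-orbit $(x_j)_{j=0}^\infty$ is, in particular, a $\delta$-average pseudo-orbit, because $\rho(T(x_j),x_{j+1})<\delta$ for all $j$ forces $\frac1n\sum_{j=0}^{n-1}\rho(T(x_{j+k}),x_{j+k+1})<\delta$ for every $n\ge1$ and $k\ge0$ (take $N=1$ in the definition). Applying the average shadowing property to such a sequence yields exactly the point demanded by (2). So all the content is in (2)$\Rightarrow$(1), and there the idea is to \emph{repair} a given average pseudo-orbit, in place, into an honest pseudo-orbit that differs from it only on a set of indices of small density, and then to transfer the ``$\eps$-traced on average'' conclusion back by the triangle inequality.

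For (2)$\Rightarrow$(1) I would fix $\eps>0$, invoke (2) with $\eps/3$ in place of $\eps$ to obtain $\delta>0$ such that every $\delta$-pseudo-orbit is $(\eps/3)$-traced on average, set $\delta_0=\delta/2$, and let $M=M(\delta_0)\ge1$ be the constant provided by chain mixing for $\delta_0$. Given a $\gamma$-average pseudo-orbit $(x_j)_{j=0}^\infty$ — where $\gamma>0$ is chosen small \emph{after} $\eps,\delta_0,M$ are known — call an index $j$ \emph{bad} if $\rho(T(x_j),x_{j+1})>\delta_0$. By a one-line averaging estimate, the defining inequality of a $\gamma$-average pseudo-orbit implies that in every sufficiently long window at most a $\gamma/\delta_0$ fraction of the indices are bad; so the set $B$ of bad indices has upper Banach density at most $\gamma/\delta_0$. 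I would then take the clusters of $B$ (connected components for the relation ``at distance $\le 2M$''); for a cluster $C$ with least element $p$ and greatest element $q$ put $I_C=[p-M,q+M]$, observe that the $I_C$ are pairwise disjoint and that $p-M-1$ and $q+M$ are good indices (by maximality of clusters the nearest bad index lies more than $2M$ away), and define $(y_j)_{j=0}^\infty$ by $y_j=x_j$ off $\bigcup_C I_C$ while on each $I_C$ replacing $(x_{p-M},\dots,x_{q+M})$ by a finite $\delta_0$-chain for $T$ from $x_{p-M}$ to $x_{q+M}$ of length $(q+M)-(p-M)\ge M$, which chain mixing supplies.

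Since the replacement is done in place, keeping the endpoints of each replaced block, $(y_j)$ is an honest $\delta$-pseudo-orbit: the internal transitions of each $I_C$ are those of a $\delta_0$-chain, the transitions lying outside every $I_C$ occur at unchanged good indices, and the two seam transitions of each $I_C$ occur at the good indices $p-M-1$ and $q+M$ — all of them at most $\delta_0<\delta$. Furthermore $(y_j)$ and $(x_j)$ differ only on $\bigcup_C I_C$, and because a cluster of $b$ bad indices has diameter at most $2M(b-1)$ its interval has length at most $2Mb+1\le(2M+1)b$, whence $\bigcup_C I_C$ has upper density at most $(2M+1)\gamma/\delta_0$; as $\rho\le1$ this gives $\limsup_n\frac1n\sum_{j=0}^{n-1}\rho(x_j,y_j)\le(2M+1)\gamma/\delta_0$. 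Now fix $\gamma$ small enough that this bound is $<\eps/3$ (in particular $\gamma<\delta_0$). Condition (2), applied to the $\delta$-pseudo-orbit $(y_j)$, produces $z\in X$ with $\limsup_n\frac1n\sum_{j=0}^{n-1}\rho(T^jz,y_j)<\eps/3$; adding the two bounds via the triangle inequality and subadditivity of $\limsup$ gives $\limsup_n\frac1n\sum_{j=0}^{n-1}\rho(T^jz,x_j)<\eps$. As $(x_j)$ was an arbitrary $\gamma$-average pseudo-orbit, $(X,T)$ has the average shadowing property.

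I expect the repair step to be the only place requiring genuine care, for two reasons. First, chain mixing delivers chains only of length $\ge M(\delta_0)$, so the bad transitions must be sparse \emph{relative to} $M$; this is precisely why $\gamma$ can be pinned down only after $M$ is known, and why one needs the uniform (Banach) density bound on $B$ — not merely an asymptotic density — to control $\bigcup_C I_C$ uniformly over all windows. Second, it is tempting to repair by \emph{inserting} chains between the good stretches instead of overwriting blocks in place; but insertion reindexes the sequence, and then the final transfer collapses, because a point tracing the reindexed sequence on average need not trace the original one — its orbit cannot ``skip over'' the inserted blocks. Performing the repair in place, at the cost of the clustering argument above, is exactly what keeps the triangle-inequality transfer legitimate. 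Everything else — the averaging estimate for $B$, the verification that each transition of $(y_j)$ is at most $\delta_0$, the density count for $\bigcup_C I_C$, and the closing triangle inequality — is routine.
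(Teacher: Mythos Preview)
The paper does not prove this statement; it is quoted as Theorem~3.6 of \cite{KKO14} and used as a black box (in the proof of Proposition~\ref{prop:approximation-for-vsp}). There is therefore no proof in the paper to compare your argument against.

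For what it is worth, your argument is correct and is the natural one. Two small points you use implicitly but do not spell out: first, the finiteness of each cluster --- your choice of $\gamma$ gives $(2M+1)\gamma/\delta_0<\eps/3\le 1/3$, hence $\gamma/\delta_0<1/(2M)$, while a cluster with $b$ elements sits in an interval of length at most $2Mb$ and so has local density at least $1/(2M)$, which for large $b$ contradicts the Banach-density bound on $B$; second, the boundary case in which the first cluster has $p<M$, so $p-M<0$ --- here one simply overwrites $[0,q+M]$ by an initial orbit segment terminating at $x_{q+M}$. Neither is a genuine gap.
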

\begin{thm}[Thm. 4.5. in {\cite{CT}}] A surjective topological dynamical system $(X,T)$ has the asymptotic average shadowing property if and only if it is Besicovitch complete and has the average shadowing property.
\end{thm}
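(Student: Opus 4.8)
The plan is to recast both shadowing notions and Besicovitch completeness in terms of the one-sided Besicovitch pseudometric
\[
  \bar{\rho}_B(u,v)=\limsup_{n\to\infty}\frac1n\sum_{j=0}^{n-1}\rho(u_j,v_j)
\]
on sequences $u,v\in X^{\N_0}$, writing $\mathcal{O}(z)=(T^jz)_{j\ge 0}$ for the orbit of $z\in X$. In this language a sequence $(x_j)$ is an asymptotic average pseudo-orbit exactly when $\bar{\rho}_B\big((Tx_j)_j,(x_{j+1})_j\big)=0$; the average shadowing property says that for every $\eps>0$ there is $\delta>0$ with every $\delta$-average pseudo-orbit $x$ admitting $z$ such that $\bar{\rho}_B(\mathcal{O}(z),x)<\eps$; the asymptotic average shadowing property says every asymptotic average pseudo-orbit $x$ admits $z$ with $\bar{\rho}_B(\mathcal{O}(z),x)=0$; and Besicovitch completeness says every $\bar{\rho}_B$-Cauchy sequence of orbits $(\mathcal{O}(z_k))_k$ has a $\bar{\rho}_B$-limit $\mathcal{O}(z)$. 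The one observation that drives everything is that $\bar{\rho}_B$ is insensitive to modifications on an index set of density zero, because $\rho\le 1$.

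The first step is a \emph{Key Lemma}: every asymptotic average pseudo-orbit $x$ satisfies $\bar{\rho}_B(x,x')=0$ for some $x'$ that is a $\delta$-average pseudo-orbit for \emph{every} $\delta>0$. To prove it I would fix $\delta$ and show that the set of indices lying in some window of length $\ge N$ over which the defects $d_j=\rho(Tx_j,x_{j+1})$ average at least $\delta$ has density tending to $0$ as $N\to\infty$; this is a one-sided Hardy--Littlewood/Vitali maximal estimate combined with $\frac1n\sum_{j<n}d_j\to 0$. Replacing $x$ on this density-zero bad set by genuine forward orbit segments annihilates the interior defects there while introducing only sparsely many boundary defects, yielding a $\delta$-average pseudo-orbit without changing $\bar{\rho}_B$; a diagonal choice over $\delta=1/m$ produces a single $x'$.

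Granting the lemma, the direction \emph{average shadowing $+$ Besicovitch complete $\Rightarrow$ asymptotic average shadowing} is clean. Given an asymptotic average pseudo-orbit $x$, pass to $x'$; for each $k$ the average shadowing property with $\eps=1/k$ gives $z_k$ with $\bar{\rho}_B(\mathcal{O}(z_k),x')<1/k$, hence $\bar{\rho}_B(\mathcal{O}(z_k),x)<1/k$. The orbits $\mathcal{O}(z_k)$ are then $\bar{\rho}_B$-Cauchy, so Besicovitch completeness supplies $z$ with $\bar{\rho}_B(\mathcal{O}(z_k),\mathcal{O}(z))\to 0$, whence $\bar{\rho}_B(\mathcal{O}(z),x)=0$ and the orbit of $z$ asymptotically traces $x$ on average.

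For the converse I would concatenate blocks on fast-growing intervals $[a_k,a_{k+1})$ with $a_k/a_{k+1}\to 0$, so that the global average of $\rho(T^jz,x_j)$ being $o(1)$ forces the average over each single block to tend to $0$. To obtain Besicovitch completeness from asymptotic average shadowing, given a $\bar{\rho}_B$-Cauchy sequence of orbits I would form the \emph{phase-preserving} concatenation $x_j=T^jz_k$ for $j\in[a_k,a_{k+1})$: its interior defects vanish and its boundary defects are sparse, so $x$ is an asymptotic average pseudo-orbit, and because phase is preserved the block-$m$ averages of $\rho(x_j,T^jz_N)$ are governed by $\bar{\rho}_B(z_m,z_N)$, small by Cauchyness; thus $\bar{\rho}_B(x,\mathcal{O}(z_N))\to 0$ and the tracer furnished by asymptotic average shadowing is the desired limit. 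To obtain the average shadowing property, I would argue by contradiction through a finitary reformulation: using compactness of $X$, failure of average shadowing yields $\eps_0>0$ and, for each $k$, a finite $1/k$-average pseudo-orbit segment of length $L_k\to\infty$ that no point traces to within $\eps_0$ over its full length; concatenating these into one asymptotic average pseudo-orbit and applying asymptotic average shadowing forces each block-average below $\eps_0$ by domination, contradicting the uniform untraceability of the segments. The hard part will be exactly this last passage: the definitions are phrased with a $\limsup$ over all time scales, whereas the concatenations only report on finite windows, so converting ``not traceable on average'' into a uniform, finite-length untraceability statement, together with the dual extraction of per-block tracing from a global $\bar{\rho}_B=0$, is delicate; it is there that compactness of $X$, uniform continuity of $T$, the density-zero insensitivity of $\bar{\rho}_B$, and careful phase bookkeeping (so that block averages truly reduce to $\bar{\rho}_B(z_m,z_N)$) must be combined.
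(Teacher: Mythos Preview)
The paper does not contain a proof of this statement. It is quoted verbatim as ``Thm.~4.5 in \cite{CT}'' and used as a black box (see the proofs of Proposition~\ref{prop:approximation-for-vsp} and Theorem~\ref{thm:main}); the authors invoke the result but never argue it. So there is no in-paper proof to compare your attempt against.

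That said, your outline is broadly the shape of the argument one finds in the Can--Trilles preprint and in the related literature (particularly \cite{KKO14}, which already establishes that asymptotic average shadowing implies average shadowing for surjective systems). The direction you label ``clean'' is indeed the easier one, and your use of the Key Lemma plus Besicovitch completeness is the right mechanism. For the converse, the Besicovitch-completeness half via phase-preserving concatenation is standard and works as you describe. The part you flag as hard---extracting average shadowing from asymptotic average shadowing by a finitary contradiction---is genuinely the delicate step, and your sketch is not yet a proof: the passage from ``no $z$ $\eps_0$-traces on average'' (a $\limsup$ statement over all of $\N$) to ``there exist arbitrarily long finite segments that no point traces to within $\eps_0$ over their full length'' requires more than compactness of $X$; one typically needs a quantitative version of average shadowing failure together with a careful choice of where to cut, and the argument in \cite{KKO14} proceeds somewhat differently. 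If you want to complete this direction on your own, that is the place to invest effort; otherwise, citing \cite{CT} and \cite{KKO14} as the present paper does is the intended route.
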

\begin{defn}
    A sequence $\{x_j\}_{j=0}^\infty\subset X$ is called an \emph{asymptotic average pseudo-orbit} if 
    \begin{equation}
        \lim_{n\rightarrow\infty}\dfrac{1}{n}\sum_{j=0}^{n-1}\rho\left( T(x_j), x_{j+1} \right)=0.
    \end{equation}
\end{defn}
\begin{defn}
    A topological dynamical system $(X,T)$ has the \emph{asymptotic average shadowing property} if for every asymptotic average pseudo-orbit $\{x_j\}_{j=0}^\infty\subset X$ there exists a point $z\in X$ such that 
    \begin{equation}
        \lim_{n\rightarrow\infty}\dfrac{1}{n}\sum_{j=0}^{n-1}\rho\left(T^j(z),x_j \right)=0.
    \end{equation}
\end{defn}

Bowen introduced the specification property in \cite{Bowen71} to examine Axiom A diffeomorphisms. Roughly speaking, this property is a stronger (uniform) form of topological transitivity. In simple terms, a topological dynamical system has the specification property if for every $\varepsilon > 0$, one can find a nonnegative integer $k$ such that for any set of finite orbit segments, there is a point that tracks each of these segments within an $\varepsilon$ distance over the set of times defining that segment and takes $k$ steps to transition between successive orbit segments. To state the definition we will need some vocabulary.

\begin{defn} %Let $(X,T)$ be a topological dynamical system. 
Given integers $0\le a<b$, an interval $[a,b)=\{n\in\N: a\le n <b\}\subseteq\N$ and $x\in X$ we write  $T^{[a,b)}(x)$ to denote the sequence  $(T^i(x))_{a\leq i <b}$ and call it the \emph{orbit segment} (of $x$ over $[a,b)$).  Whenever $T$ is invertible, we allow $[a,b)$ to be a subset of $\Z$, that is, for $a,b\in\Z$ with $a<b$ we set $[a,b)=\{n\in\Z: a\le n <b\}\subseteq\Z$ and we write  $T^{[a,b)}(x)$ to denote the sequence  $(T^i(x))_{a\leq i <b}$. 
\end{defn}

\begin{defn}
Let $k\in\N$. A \emph{$k$-spaced specification} is a sequence of $n\ge 2$ orbit segments $(T^{[a_i,b_i)}(x_i))_{1\leq i\leq n}$ such that $a_i-b_{i-1}\ge k$ for $2\leq i\leq n$. 
\end{defn}

\begin{defn}
Let $\varepsilon>0$. A specification  $(T^{[a_i,b_i)}(x_i))_{1\leq i\leq n}$ is \emph{$\varepsilon$-traced} if there exists $y\in X$ such that $\rho(T^j(x_i),T^j(y))\leq \varepsilon$ for $j\in [a_i,b_i)$ and for every $1\leq i\leq n$. 
\end{defn}

\begin{defn}
    A tds $(X,T)$ has the \emph{periodic specification property} if for every $\varepsilon>0$ there exists $k=k(\varepsilon)\in\mathbb{N}$ such that every $k(\varepsilon)$-spaced specification $(T^{[a_i,b_i)}(x_i))_{1\leq i\leq n}$ with $a_1=0$ is $\varepsilon$-traced by a periodic point $y\in X$ such that $T^{b_n+k(\varepsilon)}(y)=y$.
\end{defn}

An important observation is that chain mixing of the topological dynamical system $(X,T)$ is equivalent to the fact that the periodic specification property holds for $\XTd$ and any $\delta$. 

\begin{prop}\label{defspec}
Let $(X,T)$ be a topological dynamical system. The following are equivalent.
\begin{enumerate}[label=\textnormal{(\roman*)}]
\item \label{i} The tds $(X,T)$ is chain mixing.
    \item \label{ii} For every $\delta>0$ the system $(\XTd,S)$ has the periodic specification property.
    \item \label{iii} For every $\delta>0$ the system $(\XTd,S)$ is topologically mixing.
\end{enumerate}
\end{prop}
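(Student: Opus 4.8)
The plan is to prove the cycle of implications $\ref{i}\Rightarrow\ref{ii}\Rightarrow\ref{iii}\Rightarrow\ref{i}$, with the bulk of the work living in $\ref{i}\Rightarrow\ref{ii}$. Throughout, the key translation is that a point $z=(z_k)_{k\in\Z}\in\XTd$ is a biinfinite $\delta$-chain for $T$, so building a periodic point of $(\XTd,S)$ that $\eps$-traces a given specification amounts to assembling a biinfinite $\delta$-chain that passes $\eps$-close (in the product metric $\rZ$ on $X^\Z$) to each prescribed orbit segment and then closes up periodically.

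\emph{Proof that $\ref{i}\Rightarrow\ref{ii}$.} Fix $\delta>0$ and $\eps>0$; we may assume $\eps\le\delta$. By \eqref{cond:dist-bound}, two points of $X^\Z$ are $\eps$-close in $\rZ$ provided their coordinates indexed by $k$ with $|k|+1\le\max(1,1/\eps)$ agree to within $\eps$ in $\rho$; set $m=\lceil 1/\eps\rceil$, so it suffices to control finitely many central coordinates. Apply chain mixing to the scale $\delta$ to obtain $M\ge 1$ such that any two points of $X$ can be joined by a finite $\delta$-chain of any length $\ge M$. I claim $k(\eps):=M+2m+1$ works. Given a $k(\eps)$-spaced specification $(S^{[a_i,b_i)}(x^{(i)}))_{1\le i\le n}$ with $a_1=0$ and each $x^{(i)}\in\XTd$, we construct a periodic $z\in\XTd$ as follows. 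On each block of coordinates $[a_i,b_i)$ we copy the corresponding coordinates of the $\delta$-chain $x^{(i)}$ verbatim (this already $\eps$-traces $S^{[a_i,b_i)}(x^{(i)})$ in the sense required, since the central window of each translate sits inside that block — here is where the $2m$ padding in $k(\eps)$ is used, to guarantee the window lies strictly inside the block and the gap is long enough to absorb it). On each gap $[b_i,a_{i+1})$, whose length is at least $M$, interpolate using a finite $\delta$-chain in $X$ from $T(z_{b_i-1})$'s target down to $z_{a_{i+1}}$; concretely, chain mixing gives a finite $\delta$-chain of exactly the right length from the point $z_{b_i-1}$ (mapped one step under $T$) to the point $z_{a_{i+1}}$, whose interior fills the gap. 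Finally, to make $z$ periodic with period $p:=b_n+k(\eps)$, use one more gap of length $\ge M$ (the $+k(\eps)$ at the end) to chain-mix from $z_{p-1}$ back to $z_0$, then extend $z$ periodically over all of $\Z$. The result is a genuine biinfinite $\delta$-chain because every junction was built to satisfy $\rho(T(z_k),z_{k+1})\le\delta$, so $z\in\XTd$; it is $S^p$-fixed by construction; and it $\eps$-traces the specification. Hence $(\XTd,S)$ has the periodic specification property.

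\emph{Proof that $\ref{ii}\Rightarrow\ref{iii}$.} The periodic specification property trivially implies topological mixing: given nonempty open $U,V\subseteq\XTd$, pick cylinder neighbourhoods of points $u,v\in\XTd$ determined by finitely many coordinates, apply the periodic specification property at a small enough $\eps$ to the two-segment specification consisting of the relevant orbit segment of $u$ followed, after a gap of length $N\ge k(\eps)$, by that of $v$, and observe that for every sufficiently large $N$ the tracing point lies in $U\cap S^{-N}(V)$. (That the gap may be taken to be \emph{every} large $N$, not just multiples of something, is built into the definition of $k(\eps)$-spaced specification, which only requires the gaps to be $\ge k(\eps)$.)

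\emph{Proof that $\ref{iii}\Rightarrow\ref{i}$.} Fix $\delta>0$ and put $Y:=\XTd$. The central-coordinate projection $\pi_0\colon Y\to X$, $(z_k)\mapsto z_0$, is continuous and surjective (surjectivity because any $x\in X$ sits in a constant-like $\delta$-chain using surjectivity of $T$), and for $x,y\in X$ a finite $\delta$-chain for $T$ from $x$ to $y$ of length $\ell$ is exactly witnessed by a point $z\in Y$ with $z_0=x$ and $z_\ell=y$. So it suffices to show: for every $x,y\in X$ there is $M$ such that for all $\ell\ge M$ some $z\in Y$ has $z_0=x$, $z_\ell=y$. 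Choose small open sets $U\ni x$, $V\ni y$ in $X$ such that $z_0\in U\Rightarrow \rho(z_0,x)$ is tiny and similarly for $V$; then $\widetilde U:=\pi_0^{-1}(U)$ and $\widetilde V:=\pi_0^{-1}(V)$ are nonempty open in $Y$, and topological mixing of $(Y,S)$ yields $M$ with $\widetilde U\cap S^{-\ell}(\widetilde V)\ne\emptyset$ for all $\ell\ge M$. A point $w$ in this intersection has $w_0$ close to $x$ and $w_\ell$ close to $y$; replacing $w_0$ by $x$ and $w_\ell$ by $y$ and checking the two altered chain inequalities still hold (using continuity of $T$ to make the perturbation small, and $\eps\le\delta$) produces the desired finite $\delta$-chain. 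Taking the maximum of such $M$ over a finite cover argument — or noticing $M$ can be chosen uniformly in $x,y$ by compactness, covering $X\times X$ by finitely many product neighbourhoods and mixing — gives chain mixing.

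\emph{Main obstacle.} The delicate point is bookkeeping in $\ref{i}\Rightarrow\ref{ii}$: one must be careful that the padding $2m$ in $k(\eps)$ really does force the $\rZ$-window of each \emph{shifted} orbit segment $S^j(x^{(i)})$, for $j\in[a_i,b_i)$, to lie inside the block where $z$ agrees with $x^{(i)}$, and simultaneously that the gaps remain long enough ($\ge M$) for chain mixing to apply — including the closing gap that achieves periodicity. Matching the combinatorics of "$k$-spaced, $a_1=0$, period $b_n+k(\eps)$" to the geometry of the product metric is the only real content; everything else is routine.
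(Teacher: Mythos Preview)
Your overall strategy for \ref{i}$\Rightarrow$\ref{ii} is exactly the paper's: pad each specified segment by a window's width on either side, copy those \emph{extended} segments verbatim into the tracing point, and use chain mixing on the residual $\ge M$ positions to bridge consecutive extended segments (and to close up periodically). However, your write-up does not actually carry this out. You say you copy $x^{(i)}$ only on $[a_i,b_i)$ and then assert that ``the central window of each translate sits inside that block''. This is false at the endpoints: for $j=a_i$ the $\rZ$-window of $S^j$ reaches back to coordinate $a_i-m$, which lies in the gap, where your $z$ carries interpolation data rather than $x^{(i)}$. The $2m$ padding in your $k(\eps)=M+2m+1$ is the correct size, but it must be spent on \emph{enlarging the copied block} to (say) $[a_i-m,b_i+m]$, leaving only the remaining $\ge M$ positions for the chain-mixing bridge; this is precisely what the paper does, copying $x^{(i)}$ on the open range $(a_i-N,b_i+N-1)$ with $1/N\le\eps$ and setting $k(\eps)=2N-2+M$. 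Your ``Main obstacle'' paragraph correctly flags this bookkeeping as the crux, but the construction you describe does not resolve it.

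There is a second, smaller slip in \ref{iii}$\Rightarrow$\ref{i} (which the paper simply cites as well known). Starting from a $\delta$-chain $w\in\XTd$ with $w_0$ near $x$ and $w_\ell$ near $y$ and then overwriting the endpoints yields only a $(\delta+\eta)$-chain, since $\rho(T(x),w_1)\le\rho(T(x),T(w_0))+\rho(T(w_0),w_1)$ and the second summand is already $\le\delta$; similarly at the other end. The fix is to run your mixing argument in $X_T^{\delta/2}$ (which \ref{iii} also covers) so that the slack absorbs the perturbation. The stray ``$\eps\le\delta$'' you invoke here is a leftover from the \ref{i}$\Rightarrow$\ref{ii} part and has no meaning in this implication.
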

\begin{proof}
\noindent \ref{i}$\Leftrightarrow$\ref{iii} This is well known.
\noindent \ref{ii}$\Rightarrow$\ref{iii} This holds because periodic specification implies topological mixing for any tds.
\noindent \ref{i}$\Rightarrow$\ref{ii} Fix $\eps>0$. Let $N>0$ be such that $1/N\le \eps$. Using \eqref{cond:dist-bound}, we see that for $(x_{k}),(y_{k})\in X^\Z$ and $a,b\in \Z$ with $a<b$ we have \begin{equation}
    \label{distance-cond}
\rho(x_{n},y_{n})<\eps\text{ for all $n\in \Z$ such that }a-N< n< b+N-1
\end{equation}
implies that 
$\rZ(S^j(x_{k}),S^j(y_{k}))<\eps$ for every $j\in[a,b)\cap\Z$. 
%Cover $X$ with finitely many nonempty open sets $U_1,\ldots,U_l$ such that each of them has diameter strictly less than $\delta/2$. 
Use chain mixing to find a single number $M\ge 0$ such that for every $x,y\in X$ and every $m\ge M$ %and each pair of indices $1\le i, j \le l$ 
there is a finite $\delta$-chain $(c^{x,y}_s)_{0\le s \le m}$ from $x$ to $y$, that is, $(c^{x,y}_s)_{0\le s \le m}$ is a finite sequence in $X$ such that $c^{x,y}_0=x$, $c^{x,y}_m=y$, and $\rho(T(c^{x,y}_{s-1}),c^{x,y}_{s})\le \delta$ for $1\le s\le m$. %Since $U_1,\ldots,U_l$ cover $X$, for every $x\in X$ we have a well-defined number $u(x)=\min(1\le j \le l: x\in U_j)$.
We claim that with $k(\eps)=2N-2+M$, every $k(\eps)$-spaced specification is $\eps$-traced by a periodic point in $\XTd$. Take $x^{(1)},\ldots,x^{(n)}\in \XTd$ and $k(\eps)$-spaced specification $(S^{[a_i,b_i)}(x^{(i)}))_{1\leq i\leq n}$ such that $a_1=0$. Without loss of generality, we assume $a_{i}-b_{i-1}=k(\eps)$ for $2\le i \le n$. Fix $1\le i \le n$. By definition of $\XTd$, the sequence $x^{(i)}_{(a_i-N,b_i+N-1)}$ is a finite $\delta$-chain. 
Let $x'_i=x^{(i)}_{a_i-N+1}$ be the first and $y'_i=x^{(i)}_{b_i+N-2}$ be the last entry of this chain. %Let $\gamma_i=b_i-a_i+2N-2$. Note that $y'_i$ is the last element of the orbit segment $T^{[0,\gamma_1)}(x'_i)$.
Define $x'_{n+1}=x'_{1}$. For $1\le i \le n$, let $c^{(i)}_{[0, M]}=(c^{y'_i,x'_{i+1}}_s)_{0\le s \le M}$ be $\delta$-chain from $y'_i$ to $x'_{i+1}$.
%, whose first element $c^{(i)}_0$ belongs to the same set $U\in\{U_1,\ldots,U_l\}$ as $T(y'_i)$ and %the $T$-image of 
%the last element, $c^{(i)}_M$ %$T(c^{(i)}_M)$ 
%is in the same set $V\in\{U_1,\ldots,U_l\}$ as $x'_{i+1}$. Since the diameter of $U$ and $V$ is less than $\delta/2$, replacing the last entry $c^{(i)}_M$ of the chain $c^{(i)}_{[0, M]}$ for $1\le i \le n$ by $x'_{i+1}$ for $1\le i \le n$, we still get $\delta$-chain. Similarly, if we form the sequence 
%\[x^{(i)}_{(a_1-N,b_1+N-1)},c^{(i)}_{(0,M)]},\]
%where $1\le i\le n$, the we also get $\delta$-chain
Therefore the concatenation of sequences
\begin{equation}\label{per-d-chain}
    x^{(1)}_{(a_1-N,b_1+N-1)}, c^{(1)}_{(0, M)},\ldots, x^{(n)}_{(a_n-N,b_n+N-1)}, c^{(n)}_{(0, M)} 
\end{equation}
is $\delta$-chain. Furthermore, repeating periodically the sequence \eqref{per-d-chain} we obtain a doubly infinite periodic $\delta$-chain denoted $(y_k)$ (we choose the position $y_0$ so that it corresponds to $N-1$ entry in the sequence \eqref{per-d-chain}, that is $y_0=x^{(1)}_{a_1}$). It follows from the construction that $S^{b_n+k(\eps)}(y_k)=(y_k)$. 
Hence $(y_k)$ is $S$-periodic. Furthermore it belongs to $\XTd$ because  applying $T$ to the last entry of our sequence \eqref{per-d-chain}, which is $c^{(n)}_{M-1}$ we get a point that is at most $\delta/2$ close to the point $c^{(n)}_{M}$, which is at most $\delta/2$ close to $x'_{n+1}=x'_1$, which is the first entry of \eqref{per-d-chain}. Finally, we note that for every $1\le i\le n$ we have
\[
\rho(x^{(i)}_{n},y_{n})=0<\eps\text{ for all $n\in \Z$ such that }a_i-N< n< b_i+N-1
\]
because we have $x^{(i)}_{n}=y_{n}$ for these $n$. Therefore \eqref{distance-cond} holds for $1\le i \le n$, so
\[
\rZ(S^j((x^{(i)}_k)),S^j((y_k)))<\eps
\]
for every $j\in [a_i,b_i)\cap\Z$. 
\end{proof}

\section{Examples}
%Explain and provide examples of systems with the AASP but without any stronger specification.

The aim of this section is to point out that our main theorem proves density of ergodic measures for many topological dynamical systems that are not covered by known results. %In other words, we point out, that

By \cite{CT} we know that the asymptotic average shadowing and vague specification properties are equivalent. Therefore, assuming either the weak specification property or the almost specification property, we conclude that the vague specification property holds (see \cite{KKO14,KLO17}). Hence, any factor of a topological dynamical system with the weak specification property or with the almost specification property has the vague specification property, since the vague specification is inherited by factors (see \cite{Kamae}).

In addition, we also observe that finite products of spaces with the vague specification property have the vague specification property. 

\begin{prop}\label{prop:vsp-prod}
If $(X,T)$ and $(Y,S)$ are topological dynamical systems with the vague specification property, then the product tds $(X\times Y,T\times S)$ also has the vague specification property.   
\end{prop}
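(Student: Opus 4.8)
The plan is to work directly with the equivalent characterisation of the vague specification property in terms of the asymptotic average shadowing property, since that is the formulation that behaves best under products; by \cite{CT} the two are equivalent for surjective systems, and a product of surjections is a surjection. So it suffices to show: if $(X,T)$ and $(Y,S)$ both have the asymptotic average shadowing property, then so does $(X\times Y,T\times S)$. I would equip $X\times Y$ with the metric $\rho\bigl((x,y),(x',y')\bigr)=\max\{\rho_X(x,x'),\rho_Y(y,y')\}$ (any of the standard product metrics works, since asymptotic average shadowing is a metric-independent property, but fixing one makes the estimates clean), and note that for this metric an averaged sum splits as $\frac1n\sum_j \rho\bigl((x_j,y_j),(x_j',y_j')\bigr)\le \frac1n\sum_j\rho_X(x_j,x_j')+\frac1n\sum_j\rho_Y(y_j,y_j')$, while conversely each coordinate sum is bounded above by the joint one.

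The main steps: first, take an asymptotic average pseudo-orbit $\bigl((x_j,y_j)\bigr)_{j\ge 0}$ for $T\times S$. From the displayed inequality above and the fact that $\rho_X(T x_j,x_{j+1})\le \rho\bigl((T\times S)(x_j,y_j),(x_{j+1},y_{j+1})\bigr)$ (and symmetrically for $Y$), the projections $(x_j)_{j\ge 0}$ and $(y_j)_{j\ge 0}$ are asymptotic average pseudo-orbits for $T$ and $S$ respectively. Second, apply asymptotic average shadowing in each coordinate to obtain $z\in X$ and $w\in Y$ with $\frac1n\sum_{j=0}^{n-1}\rho_X(T^j z,x_j)\to 0$ and $\frac1n\sum_{j=0}^{n-1}\rho_Y(S^j w,y_j)\to 0$. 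Third, set $(z,w)\in X\times Y$ and observe that $\frac1n\sum_{j=0}^{n-1}\rho\bigl((T\times S)^j(z,w),(x_j,y_j)\bigr)\le \frac1n\sum_{j=0}^{n-1}\rho_X(T^j z,x_j)+\frac1n\sum_{j=0}^{n-1}\rho_Y(S^j w,y_j)\to 0$, which is exactly the tracing conclusion for $T\times S$. Finally, invoke \cite{CT} once more to pass back from asymptotic average shadowing to vague specification for the product.

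Honestly, there is no serious obstacle here once one commits to using the asymptotic average shadowing reformulation — the argument is a routine coordinatewise splitting, and the only thing to be careful about is that the definition of asymptotic average shadowing as stated in the excerpt does not ask for a uniform $\delta$ (it is the genuine ``asymptotic'' version, where one just needs the Cesàro averages to tend to zero), so no $\varepsilon$--$\delta$ bookkeeping across the two coordinates is needed: the two limits can be handled independently and added. (If one instead wanted to argue straight from the definition of vague specification, the bookkeeping of block lengths and gaps across the two factors would be the fiddly part, which is precisely why I would route through \cite{CT}.) One should also remark that the surjectivity hypothesis needed to apply \cite{CT} is automatic for $T\times S$, and that the choice of product metric is immaterial since all product metrics are uniformly equivalent and asymptotic average shadowing is preserved under uniformly equivalent metrics.
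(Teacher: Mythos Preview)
Your proposal is correct and follows essentially the same route as the paper: both pass to the asymptotic average shadowing property via \cite{CT}, project an asymptotic average pseudo-orbit in $X\times Y$ to its coordinates, trace each coordinate separately, and then combine the two tracing points using the bound $\max\{a,b\}\le a+b$ together with the uniform equivalence of product metrics. The paper's argument is slightly terser but structurally identical to yours.
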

\begin{proof}
    We will show that $(X\times Y, T\times S)$ has the asymptotic average shadowing property since the vague specification property is equivalent to the asymptotic average shadowing property. Let $\{z_j\}_{j=1}^\infty$ be an asymptotic average pseudo-orbit in $X\times Y$. For $j\geq 1$, we write $z_j=(x_j,y_j)$ where $x_j\in X$, $y_j\in Y$. Then we see that $\{x_j\}_{j=1}^\infty\subset X$ and $\{ y_j\}_{j=1}^\infty \subset Y$ are asymptotic average pseudo-orbits in $X$, and $Y$, respectively. Hence, there exist $x\in X$ and $y\in Y$ such that 
    \begin{align}\label{eqn:tracing-pseudo-orbits}
        \begin{split}
            & \lim\limits_{n\rightarrow \infty }\dfrac{1}{n}\sum_{j=0}^{n-1}\rho_X\left(T^j(x),x_j\right)=0,\\
        &\lim\limits_{n\rightarrow \infty }\dfrac{1}{n}\sum_{j=0}^{n-1}\rho_Y\left(T^j(y),y_j\right)=0.
        \end{split}
    \end{align} 
    Using \eqref{eqn:tracing-pseudo-orbits} we get $z=(x,y)\in X\times Y$ such that 
    \begin{equation*}
        \lim\limits_{n\rightarrow \infty }\dfrac{1}{n}\sum_{j=0}^{n-1}\rho_{X\times Y}\left((T\times S)^j(z),z_j\right)=0,
    \end{equation*}
    since $\rho_{X\times Y}$ is uniformly equivalent to the $\rho_{\max}(z_1,z_2):=\max\{\rho_X(x_1,x_2),\rho_Y(y_1,y_2)\}$, and $\max\{a,b\}\leq a+b$ for $a,b\geq 0$.
    Thus, $(X\times Y, T\times S)$ has the asymptotic average shadowing property.
\end{proof}

%Using the properites of the vague specification property we get the following conclusion. 

\begin{cor}\label{cor:prod-vsp}
    Assume that $(X,T)$ has the weak specification property but not the almost specification property, and $(Y,S)$ has the almost specification property but not the weak specification property. Then the product tds $(X\times Y, T\times S)$ has the vague specification property but neither the weak specification property nor the almost specification property.
\end{cor}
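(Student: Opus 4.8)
The plan is to split the statement into the one positive assertion (vague specification of the product) and the two negative ones (failure of weak and of almost specification), handling the negatives uniformly by passing to the two coordinate‑projection factors.

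For the positive part there is essentially nothing new to do. As recorded at the start of this section, the weak specification property of $(X,T)$ and the almost specification property of $(Y,S)$ each imply the vague specification property (equivalently, the asymptotic average shadowing property), so both $(X,T)$ and $(Y,S)$ have the vague specification property, and Proposition~\ref{prop:vsp-prod} applies verbatim to give that $(X\times Y,T\times S)$ has it as well. I would also add a sentence noting that the hypotheses are not vacuous: systems witnessing each of the two non‑implications between weak and almost specification exist by \cite{KLO17,KOR,Pavlov}.

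For the two negative parts the key observation is that $(X,T)$ and $(Y,S)$ are factors of $(X\times Y,T\times S)$ via the coordinate projections $\projX$ and $\projY$ (each is a continuous surjection intertwining the dynamics, using that $X$ and $Y$ are nonempty), and that both the weak and the almost specification properties transfer along such projections. I would give this concretely by lifting. Suppose $(X\times Y,T\times S)$ had the weak specification property; given a specification $(S^{[a_i,b_i)}(y_i))_{1\le i\le n}$ in $Y$, complete it to a specification $((T\times S)^{[a_i,b_i)}((x_i,y_i)))_{1\le i\le n}$ in $X\times Y$ by choosing arbitrary $x_i\in X$. The spacing hypothesis is literally the same, and a point $(x,y)$ that $\eps$‑traces the lifted specification (with respect to $\rho_{\max}$, which is uniformly equivalent to $\rho_{X\times Y}$) yields the point $y\in Y$, which $\eps$‑traces the original specification since $\rho_Y\le\rho_{\max}$; as the gap data carry over unchanged, $(Y,S)$ would have weak specification, contradicting the hypothesis. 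The same argument with the roles of $X$ and $Y$ interchanged — now completing a specification in $X$ by arbitrary $Y$‑coordinates, and using the mistake function $g$ and blow‑up constants $k_\eps$ furnished for $X\times Y$ — shows that the almost specification property of $(X\times Y,T\times S)$ would force $(X,T)$ to have the almost specification property, again a contradiction. Hence $(X\times Y,T\times S)$ has neither property, and the corollary follows.

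The one point that calls for a little care — and which I expect to be the main (though minor) obstacle — is the claim that weak and almost specification really do transfer along $\projX$ and $\projY$ without loss. For weak specification it is immediate because only a gap lower bound is constrained and tracing distances only shrink under a coordinate projection. For almost specification one must additionally check that the ``good'' set of times on which the trace of the lifted orbit segment in $X\times Y$ is $\eps$‑close (the complement of the allowed mistake set) projects to an equally large good set of times for the segment in $X$; this is exactly what $\rho_X\le\rho_{\max}$ together with the uniform equivalence of $\rho_{\max}$ and $\rho_{X\times Y}$ gives, while the mistake function and the blow‑up constants are unchanged. Alternatively one may simply invoke the known fact that these two properties pass to factors; either way the proof is complete.
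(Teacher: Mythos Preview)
Your proposal is correct and follows essentially the same approach as the paper: establish vague specification of the product via Proposition~\ref{prop:vsp-prod}, then rule out weak and almost specification by observing that each coordinate system is a factor of the product and that both properties pass to factors. The paper simply cites \cite{KLO16} for the inheritance of weak specification by factors and says ``similarly'' for almost specification, whereas you spell out the lifting argument explicitly; your added remark on the non-vacuity of the hypotheses is also sound and not in the paper's proof.
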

\begin{proof}
    As discussed above, both systems $(X,T)$ and $(Y,S)$ have the vague specification property. The same holds for $(X\times Y, T\times S)$ by Proposition \ref{prop:vsp-prod}. Now, if $(X\times Y, T\times S)$ has the weak specification property then so does $(Y,S)$ since $(Y,S)$ is a factor of $(X\times Y, T\times S)$ and the weak specification property is inherited by factors (see \cite{KLO16}). Similarly, $(X\times Y, T\times S)$ cannot have the almost specification property. We conclude that the product tds $(X\times Y, T\times S)$ has the vague specification property but has neither the weak specification property nor the almost specification property. 
\end{proof}
By Corollary \ref{cor:prod-vsp} we get examples of non-symbolic topological dynamical systems having the vague specification property but without the weak specification property or the almost specification property. Since the example of a topological dynamical system $(X,T)$ with the weak specification property but without the almost specification property is not a subshift (it is not positively expansive, so it cannot be a subsystem of any symbolic system) neither is any product system having $(X,T)$ as one of its factors.

%It is enough to  take a system with 

Note that any non-trivial system with either the weak specification property or the almost specification property has many disjoint invariant subsets. Therefore, a non-trivial system with weak specification (or almost specification) is neither proximal nor minimal. Proximality is impossible because proximal systems do not have disjoint subsystems. Minimality is excluded because minimal tds do not have any non-trivial subsystems. Hence, the examples of proximal and minimal shift spaces that were constructed in \cite{CKKK} and examined further in \cite{CT} cannot have the weak specification property or the almost specification property. However, they both have the vague specification property (see \cite{CT}).

%\section{Approximations}
%Definition

%\[
%\XT=\bigcap_{n=1}^\infty \XTn.
%\]

%Our main observations is that chain mixing of the system $(X,T)$ implies that $\XTd$ has the periodic specification property of $\delta>0$. 
%\begin{prop}
%If $(X,T)$ is chain mixing, then for every $\delta>0$ the system $\XTd$ has the periodic specification property.    
%\end{prop}
%\begin{proof}
%\end{proof}

\section{Different shades of the Besicovitch pseudometric}
When it comes to defining the Besicovitch pseudometric on the product space $X^\Z$ we face a choice. Taking the topological dynamical system $(X^\Z,S)$ and a metric $\rZ$ on $X^\Z$ we follow the general procedure of defining the Besicovitch pseudometric on a topological dynamical system. We call the resulting pseudometric the dynamical Besicovitch pseudometric on $X^\Z$.
\begin{defn}
    The \emph{dynamical Besicovitch pseudometric} on $\XZ$ is defined to be the $\rB$-distance between orbits in the system $(\XZ,S)$, that is,
        \begin{equation} \label{eqn:bes2}
            \rZB(x,y) = \limsup_{N\to\infty} \frac{1}{N}\sum_{j=0}^{N-1}\rZ(S^j(x),S^j(y)).
        \end{equation}
\end{defn}
The other option ignores the dynamics induced by the shift $S$ on $X^\Z$ and is based on the fact that the elements of $X^\Z$ are sequences with values in a metric space $(X,\rho)$. This leads to a pseudometric we call the coordinatewise Besicovitch pseudometric on $X^\Z$. In fact, the dynamical Besicovitch pseudometric is based on the same principle, but it uses that the orbits of points in $X^\Z$ with respect to the shift $S$ are sequences of elements of $X^\Z$, that is, elements of $(X^\Z)^\Z$.
\begin{defn}
    The \emph{coordinatewise Besicovitch pseudometric} $\crB$ on $\XZ$ is defined for $x=(x_{j})_{j\in\Z},y=(y_{j})_{j\in\Z} \in \XZ$ by
        \begin{equation} \label{eqn:bes1}
            \rB(x,y) = \limsup_{N\to\infty} \frac{1}{N}\sum_{j=0}^{N-1}\rho(x_j,y_j).
        \end{equation}
\end{defn}
Since every point in $X^\Z$ contains complete information about the orbit of that point with respect to the shift $S$ it is not surprising that the coordinatewise and the dynamical Besicovitch pseudometrics are uniformly equivalent on $X^\Z$. 
\begin{lemma}\label{lem:coordinate-dynamical-equiv}
    The coordinatewise Besicovitch pseudometric $\rB$ and the dynamical Besicovitch pseudometric $\rZB$ are uniformly equivalent on $\XZ$.
\end{lemma}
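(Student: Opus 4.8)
The plan is to establish the two inequalities that witness uniform equivalence: for every $x,y\in\XZ$ one has $\rB(x,y)\le \rZB(x,y)$ trivially, since $\rho(x_j,y_j)=\rho(S^j(x)_0,S^j(y)_0)\le \rZ(S^j(x),S^j(y))$ for every $j$ (recall $\rZ$ dominates the $0$-th coordinate distance by definition of $\rZ$), and then take Ces\`aro averages and $\limsup$. The real content is the reverse direction: I would show that for every $\eps>0$ there is a $\delta>0$ such that $\rB(x,y)<\delta$ implies $\rZB(x,y)<\eps$, or more quantitatively, produce a modulus $\omega$ with $\rZB(x,y)\le\omega(\rB(x,y))$ and $\omega(t)\to 0$ as $t\to 0$.

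The key step is to bound $\rZ(S^j(x),S^j(y))$ in terms of finitely many coordinate distances $\rho(x_{j+k},y_{j+k})$. Fix $\eps>0$ and let $m=\lceil 1/\eps\rceil$. By the characterisation \eqref{cond:dist-bound}, if $\rho(x_{j+k},y_{j+k})<\eps$ for all $k$ with $|k|+1\le m$, then $\rZ(S^j(x),S^j(y))<\eps$; in any case $\rZ(S^j(x),S^j(y))\le \eps + \sum_{|k|<m}\rho(x_{j+k},y_{j+k})$, using $\diam(X)\le 1$ to control the tail coordinates by $1/(|k|+1)<\eps$. Averaging over $j=0,\dots,N-1$ gives
\[
\frac{1}{N}\sum_{j=0}^{N-1}\rZ(S^j(x),S^j(y)) \le \eps + \sum_{|k|<m}\frac{1}{N}\sum_{j=0}^{N-1}\rho(x_{j+k},y_{j+k}).
\]
Each inner sum is a shifted Ces\`aro average of $(\rho(x_i,y_i))_i$; since shifting a bounded sequence by a fixed amount $k$ does not change its $\limsup$ of Ces\`aro averages, $\limsup_{N}\frac1N\sum_{j=0}^{N-1}\rho(x_{j+k},y_{j+k})=\rB(x,y)$ for each fixed $k$. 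Taking $\limsup_{N\to\infty}$ and using that there are $2m-1$ values of $k$ yields $\rZB(x,y)\le \eps + (2m-1)\rB(x,y)$. Symmetrically (trivially) $\rB\le\rZB$, so the two pseudometrics are uniformly equivalent: given target $\eps'$, choosing $\eps=\eps'/2$ and then requiring $\rB(x,y)<\eps'/(2(2m-1))$ forces $\rZB(x,y)<\eps'$.

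The only mildly delicate point is the claim that the $\limsup$ of the $k$-shifted Ces\`aro averages equals that of the unshifted ones; this is a standard fact for bounded sequences (the difference between $\frac1N\sum_{j=0}^{N-1}a_{j+k}$ and $\frac1N\sum_{j=0}^{N-1}a_j$ is at most $\frac{2k}{N}\sup_i|a_i|\to 0$), and I would either invoke it directly or insert the one-line estimate. I do not expect any genuine obstacle; the argument is essentially bookkeeping around the explicit form of $\rZ$ and the boundedness of $\rho$, and the constant $2m-1$ with $m\asymp 1/\eps$ is harmless since we only claim \emph{uniform} equivalence, not an isometry or a Lipschitz bound with uniform constant.
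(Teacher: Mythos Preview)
Your argument is correct, and it takes a genuinely different, more elementary route than the paper's. The paper first invokes \cite[Lemma~2]{KLO17} to replace $\rB$ and $\rZB$ by the uniformly equivalent density-based pseudometrics
\[
\hatrB(x,y)=\inf\{\delta>0 \mid \dbar(\{k\ge0:\rho(x_k,y_k)\ge\delta\})<\delta\}
\]
and its $\rZ$-analogue $\hatrBZ$, and then compares those: the nontrivial direction shows that if $\rZ(S^k(x),S^k(y))\ge\delta'$ then some coordinate in a window $\{k-N,\dots,k+N\}$ (with $N\approx 1/\delta$) has $\rho$-distance $\ge\delta'$, so the upper density of $\rZ$-bad times is at most $(2N+1)$ times that of $\rho$-bad times. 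You instead establish the pointwise bound $\rZ(S^j(x),S^j(y))\le\eps+\sum_{|k|<m}\rho(x_{j+k},y_{j+k})$ and pass directly to Ces\`aro averages, using subadditivity of $\limsup$ and shift-invariance of the Ces\`aro $\limsup$ for bounded sequences. Both proofs rest on the same finite-window feature of $\rZ$, but yours is self-contained and produces an explicit modulus $\rZB\le\eps+(2\lceil 1/\eps\rceil-1)\,\rB$, while the paper's route packages the work into the auxiliary $\hatrB$-formulation already present in the literature. One cosmetic remark: the tail control for $|k|\ge m$ comes from the $1/(|k|+1)$ cap in $\rZ$ rather than from $\diam(X)\le1$; the latter is what you actually use in the easy direction to get $\rho(x_j,y_j)=\min(\rho(x_j,y_j),1)\le\rZ(S^j(x),S^j(y))$.
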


\begin{proof}
    By \cite[Lemma 2]{KLO17}, $\rB$ is uniformly equivalent to the pseudometric $\hatrB$ on $X^\Z$ defined by
        \[
            \hatrB(x,y) = \inf\{\delta>0 \mid \dbar(\{k\ge0 \mid \rho(x_k,y_k)\ge\delta\}) < \delta\},
        \]
    where as usual, $\dbar(A)$ is the upper asymptotic density of $A\subset\N_0$, that is
        \[
            \dbar(A)= \limsup_{n\to\infty} \frac{\#(A\cap\{0,\dots,n-1\})}{n}.
        \]
 Applying the same lemma to the metric space $(\XZ,\rZ)$, we also have an equivalence between $\rZB$ and the pseudometric $\hatrBZ$ defined by
        \[
            \hatrBZ(x,y) = \inf\{\delta>0 \mid \dbar(\{k\ge0 \mid \rZ(S^k(x),S^k(y))\ge\delta\}) < \delta\}.
        \]
    To complete the proof, we show that $\hatrB$ is equivalent to $\hatrBZ$. One direction is immediate, since $\rZ(S^k(x),S^k(y)) \ge \rho(x_k,y_k)$ for any $k\ge0$. (Indeed, if $\delta \in(0,1]$ and $N$ is the largest integer dominated by $1/\delta-1$, then $\rho(x_k,y_k) \ge \delta$ implies that $\rZ(S^n(x),S^n(y)) \ge \delta$ for $k-N \le n \le k+N$ and in particular that $\rZ(S^k(x),S^k(y)) \ge \delta$.) So for any $\delta >0$, $\rZ(S^k(x),S^k(y)) \ge \delta$ is a more frequent event than $\rho(x_k,y_k) \ge \delta$, and so $\hatrBZ(x,y)$ dominates $\hatrB(x,y)$ as it is the infimum of a smaller set.

    For the other direction, let $\delta\in(0,1]$, and as above let $N\in\N$ be the largest integer dominated by $1/\delta-1$. Set $\delta'=\frac{\delta}{2N+1}$. Suppose that $x,y\in\XZ$ with $\hatrB(x,y)<\delta'$, so
        \begin{equation} \label{eqn:dbar}
            \dbar(\{k\ge0 \mid \rho(x_k,y_k)\ge\delta'\}) < \delta/(2N+1).
        \end{equation}
    If $k\ge0$ is such that $\rZ(S^k(x),S^k(y)) \ge \delta'$, then there exists $n\in\Z$ with $k-N \le n \le k+N$ such that $\rho(x_n,y_n)\ge\delta'$. In other words, $\{k\ge0 \mid \rZ(S^k(x),S^k(y))\ge \delta'\}$ is a subset of
        \[
            \{k\ge0 \mid \rho(x_n,y_n)\ge\delta' \text{ for some } n\in\{k-N,\dots,k+N\}\}.
        \]
    It follows that the upper asymptotic density of $\{k\ge0 \mid \rho(x_k,y_k) \ge \delta'\}$ is at least
        \[
            (1/(2N+1)) \cdot \dbar(\{k\ge0 \mid \rZ(S^k(x),S^k(y))\ge \delta'\}). 
        \]
    Since $\delta\ge\delta'$, we therefore have from \eqref{eqn:dbar} that
        \begin{align*}
	   \dbar(\{k\ge0 \mid \rZ(S^k(x),S^k(y))\ge \delta\}) &\le \dbar(\{k\ge0 \mid \rZ(S^k(x),S^k(y))\ge \delta'\})\\
	   &\le (2N+1) \dbar(\{k\ge0 \mid \rho(x_k,y_k) \ge \delta'\})\\
	   &< \delta
	\end{align*}
    and so by definition, $\hatrBZ(x,y) \le \delta$.
%    let $\varepsilon >0$, choose $N\in\N$ with $2^{1-N}<\varepsilon$, and suppose that $x,y\in\XZ$ with $\hatrB(x,y)<\varepsilon'=\frac{\varepsilon}{2N+1}$. In particular,
%        \begin{equation} \label{eqn:dbar}
%            \dbar(\{k\ge0 \mid \rho(x_k,y_k)\ge\varepsilon\}) \le \dbar(\{k\ge0 \mid \rho(x_k,y_k)\ge\varepsilon'\}) < \varepsilon'.
%        \end{equation}
%    Observe that, if $k\ge0$ is such that $\rho(x_j,y_j)<\varepsilon$ for every $j\in\{k-N,\dots,k+N\}$, then
%        \begin{align*}
%            \rZ(S^k(x),S^k(y))
%                &= \sum_{j\in\Z} 2^{-|j|}\rho(x_{j+k},y_{j+k})\\
%                &\le \sum_{j=-N}^N2^{-|j|}\rho(x_{k+j},y_{k+j}) + 2 \cdot \sum_{j=N+1}^\infty2^{-j}\diam(X)\\
%                &< 4\varepsilon.
%        \end{align*}
%    In other words, $\{k\ge0 \mid \rZ(S^k(x),S^k(y))\ge 4\varepsilon\}$ is a subset of
%        \[
%            \{k\ge0 \mid \rho(x_j,y_j)\ge\varepsilon \text{ for some } j\in\{k-N,\dots,k+N\}\}.
%        \]
%    The upper asymptotic density of this latter set is at most
%        \[
%            (2N+1) \cdot \dbar(\{k\ge0 \mid \rho(x_k,y_k) \ge \varepsilon\}),
%        \]
%    which by \eqref{eqn:dbar} is less than $(2N+1)\varepsilon' = \varepsilon < 4\varepsilon$. It follows that
%        \[
%            4\varepsilon \in \{\delta>0 \mid \dbar(\{k\ge0 \mid \rZ(S^k(x),S^k(y))\ge \delta\}) < \delta\},
%        \]
%    so by definition, $\hatrBZ(x,y) \le 4\varepsilon$.
\end{proof}

\section{$\bar\rho$}

Usually we endow the space of invariant measures $\MT(X)$ of a tds $(X,T)$ with the weak$^*$ topology. This topology has some shortcomings.  It is not compatible with entropy, that is the entropy function is not continuous on $\MT(X)$  (high-entropy measures can be weak$^*$ close to low-entropy measures, in particular zero-entropy measures are often dense, but the entropy of some measures is non-zero). Furthermore, properties like mixing or ergodicity are not preserved by weak$^*$ convergence. The great advantage of the weak$^*$ topology on $\MT(X)$ is that it is compact and metrisable. 

When $(X,T)$ is a subsystem of a symbolic system, an alternative topology on $\MT(X)$ is provided by Ornstein's d-bar metric $\dbar$. Ornstein's metric on $\MT(X)$ is complete and better captures dynamical properties, although it is typically harder to work with. Furthermore, $\MT(X)$ endowed with Ornstein's metric often is neither compact nor even separable. Up to recently, Ornstein's metric has been so far studied only for symbolic systems (\cite{MR368127} and \cite{Schwarz} are the only exceptions we know). 
For symbolic systems, Ornstein's d-bar distance between two shift-invariant measures $\mu$ and $\nu$ roughly measures how well you can match the typical (generic) orbits for $\mu$ and $\nu$. To measure how well two points (symbolic sequences) are matched, one uses the upper asymptotic density of the set of position at which the sequences differ. In a forthcoming paper \cite{BCKO} 
%"Spectrum of Invariant Measures via Generic Points," 
the authors study a new metric $\bar{\rho}$ on the space $\MT(X)$ of invariant measures of a topological dynamical system $(X,T)$.
The metric is defined by $\bar{\rho}(\mu,\nu) = \inf\{\int\rho(x,y)d\lambda(x,y) : \lambda \in J(\mu,\nu)\}$, where $J(\mu,\nu)$ is the set of all joinings of $\mu$ and $\nu$, and $\rho$ is a compatible metric on $X$. 
The metric $\bar{\rho}$ is uniformly equivalent to Ornstein's d-bar metric on the space of shift-invariant measures on symbolic spaces. However, $\bar{\rho}$ works for general topological dynamical systems rather than just symbolic ones.

Here we only recall the definition of $\bar{\rho}$, for details we refer to \cite{BCKO}. We need to introduce some terminology. 
Assume that $(X,T)$ is a topological dynamical system. Let $\projX$ and $\projY$ denote the canonical projections from $X\times X$ onto the first and the second coordinate. For any measure $\lambda$ on $X\times X$, we define the push-forward measure $\projZ_*(\lambda)$ on a Borel set $B\subseteq X$ (where $i$ is either $1$ or $2$) by $\projZ_*(\lambda)(B)=\lambda((\projZ)^{-1}(B))$.
A \emph{joining} of $\mu,\nu\in \MT(X)$ is defined as a $T\times T$-invariant measure $\lambda$ on $X\times X$ satisfying $\projX_*(\lambda)=\mu$ and $\projY_*(\lambda)=\nu$. The set of all such joinings, denoted by $J(\mu,\nu)$, forms a nonempty, closed, and convex subset of $\mathcal{M}_{T\times S}(X\times X)$. We set
\begin{equation}\label{def:rho-bar}
\bar{\rho}(\mu,\nu)=\inf_{\lambda\in J(\mu,\nu)}\int_{X\times X} \rho(x,y) \text{ d}\lambda(x,y).
\end{equation}

The map $J(\mu,\nu)\ni\lambda\mapsto \int_{X\times X} \rho(x,y) \text{ d}\lambda(x,y)\in\R$ is both convex and continuous. Consequently, the infimum in \eqref{def:rho-bar} is attained at some extreme point of $J(\mu,\nu)$. Moreover, when both $\mu$ and $\nu$ are ergodic, the set $J^e(\mu,\nu)$ of extreme points of $J(\mu,\nu)$ consists entirely of ergodic $T\times T$-invariant measures.
If $\rho$ is a compatible metric on $X$, then $\bar{\rho}$ defined by \eqref{def:rho-bar} is a metric on $\MT(X)$ (cf. the proof of \cite[Theorem 21.1.2]{Garling}). The triangle inequality relies on the gluing lemma \cite[Theorem 16.11.1]{Garling}. The topology induced by $\bar{\rho}$ on $\MT(X)$ is stronger than the weak$^*$ topology: If $\mu\in\MT(X)$ and $(\mu_n)_{n\ge 1}$ is a sequence in $\MT(X)$ such that $\bar{\rho}(\mu_n,\mu)\to 0$ as $n\to\infty$, then $\mu_n$ converges to $\mu$ in the weak$^*$ topology.
For a proof, write $W_1$ for the Wasserstein metric that induces the weak$^*$ topology (see \cite{Garling}), and note that $W_1(\mu,\nu)\le \bar{\rho}(\mu,\nu)$ for all $\mu,\nu\in\MT(X)$. %We formalize this observation in the following remark:

%\begin{remark}\label{rem:weakstar}
%\end{remark}

There is a strong connection between Besicovitch dynamically defined pseudometric and $\bar\rho$. This connection allows the authors of \cite{BCKO} to show that if $(X,T)$ is a tds, $\mu,\nu\in\MT(X)$, and $x,y\in X$ are such that $x$ (respectively, $y$) is generic for $\mu$ (respectively, $\nu$), then
\begin{equation}\label{ineq:B-bound-on-bar-rho}
\bar{\rho}(\mu,\nu)\le
\limsup_{n\to\infty}
\frac{1}{n} \sum_{j=0}^{n-1} \rho(T^j(x),S
T^j(y)).    
\end{equation}
This allows the authors of \cite{BCKO} to translate properties of sequences of generic points for invariant measures converging in the Besicovitch pseudometric to properties of sequences of measures converging in the $\bar{\rho}$ metric. It follows that many properties of invariant measures are $\bar\rho$-closed.

\section{Hausdorffication of $\bar\rho$}

We extend the results obtained in \cite{KKK} for symbolic systems to general dynamical systems.
Let $(X,T)$ be a topological dynamical system.
Having a well-behaved distance $\bar{\rho}$ between $T$-invariant measures on $X$ accompanied by the Besicovitch pseudometric $\rB$ between points in $X$, we turn to defining the distances between sets of measures and sets of points. 
To this end, we mimic the construction of the Hausdorff metric between compact subsets of a metric space. Recall that the Hausdorff metric measures the (pseudo-) distance between two sets by finding the supremum of (pseudo-) distances from points in one set to the other set. Note, however, that in our setting we no longer consider compact sets; usually the spaces of all $T$-invariant measures endowed with $\bar{\rho}$ and $X^\Z$ with the Besicovitch pseudometric are not compact. However, the set of possible distances is bounded above, so we still get a Hausdorff metric $\barrH$ for sets of measures or a Hausdorff pseudometric $\rBH$ for subsets of $X$. See \cite[pp. 11--15]{hyperspace-book} for more details. In particular, both constructions apply to the topological dynamical system $(X^\Z,S)$. 

Formally, for nonempty sets $A$ and $B$ contained in a set $X$ endowed with a bounded pseudometric $p$ we set
\[
p^H(A,B) = \max\{ \sup_{a\in A} \inf_{b\in B} p(a,b), \sup_{b\in B} \inf_{a\in A} p(a,b) \}.
\]
This leads to a pseudometric that captures how far apart two sets are in terms of their worst-case point-to-set distances in both directions.
Recall that the distance $p(x,A)$ between a point $x$ and a nonempty subset $A$ in a pseudometric space $(X,p)$ is defined as:
\[
p(x,A) = \inf\{p(x,a) : a \in A\}.
\]
This measures the largest lower bound for a pseudo-distance from $x$ to a point in $A$. If $p$ is a metric on $x$, then $p(x,A)=0$ if and only if $x\in \overline{A}$, so in this case, $p^H$ is a metric on the space of all closed nonempty subsets of $X$.

Therefore, given a topological dynamical system $(X^\Z,S)$ we endow $X^\Z$ with a metric $\rZ$. Hence, we obtain a metric $\brZ$ on $\MS(X^\Z)$ and a pseudometric $\rZB$ on $X^Z$. Using the construction described above, we get the metric $\brZH$ on all nonempty $\brZ$-closed subsets of $\MS(X^\Z)$ and the pseudometric $\rZBH$ on all subsets of $X$. In particular, for all subsystems $Y$ and $Z$ of $X^\Z$ we may compare the Hausdorff pseudo-distance $\rZBH(Y,Z)$ with the distance $\brZH(\MSe(Y),\MSe(Z))$ between their sets of ergodic invariant measures (the latter are nonempty $\brZ$-closed sets in $\MS(X^\Z)$, closedness follows from \cite{BCKO}). %We get

\begin{prop}\label{prop:dbar-le-dund}
If $Y$ and $Z$ are subsystems of $X^\Z$, then
\[\brZH(\MS(Y),\MS(Z))=\brZH(\MSe(Y),\MSe(Z))\leq \rZBH(Y,Z). %\Hdund(X,Y)\le \Hdbar(X,Y).
\]
\end{prop}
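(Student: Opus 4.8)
The plan is to prove the statement in two pieces: first the equality
\[
\brZH(\MS(Y),\MS(Z))=\brZH(\MSe(Y),\MSe(Z)),
\]
and then the inequality $\brZH(\MSe(Y),\MSe(Z))\le \rZBH(Y,Z)$.

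\textbf{The equality.} The key input is that ergodic measures are $\brZ$-dense in the full simplex of invariant measures of any subsystem; more precisely, for any subsystem $W$ of $X^\Z$ the set $\MSe(W)$ is $\brZ$-dense in $\MS(W)$ in the following strong sense: every $\mu\in\MS(W)$ can be $\brZ$-approximated by \emph{finite convex combinations} of ergodic measures of $W$ (its ergodic decomposition yields a measure on $\MSe(W)$, and a standard approximation shows $\brZ(\mu,\sum_i t_i \mu_i)$ can be made small by taking a fine enough partition of the ergodic decomposition — this follows from the convexity and continuity of $\lambda\mapsto\int\rZ\,d\lambda$ on joinings, exactly as used in the discussion of $\bar\rho$ in the excerpt, together with concavity of the ergodic decomposition). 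Granting this, since $\MSe(W)\subseteq\MS(W)$ and both the point-to-set distance $\inf$ and the set-to-set Hausdorff distance are monotone under enlarging a set while its $\brZ$-closure is unchanged, one gets $\brZH(\MS(Y),\MS(Z))=\brZH(\overline{\MSe(Y)},\overline{\MSe(Z)})=\brZH(\MSe(Y),\MSe(Z))$, using that the Hausdorff (pseudo)metric of two sets equals that of their closures. I would spell this out as a short lemma: for bounded pseudometric $p$ and sets $A\subseteq A'\subseteq\overline{A}$ one has $p^H(A',B)=p^H(A,B)$ for any $B$.

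\textbf{The inequality.} Fix $\eps>0$ and take $\mu\in\MSe(Y)$; I must produce $\nu\in\MSe(Z)$ with $\brZ(\mu,\nu)\le\rZBH(Y,Z)+\eps$, and symmetrically. Since $\mu$ is ergodic, pick (via the Birkhoff ergodic theorem) a point $x\in Y$ which is generic for $\mu$. By definition of $\rZBH(Y,Z)$ there is a point $y\in Z$ with $\rZB(x,y)\le\rZBH(Y,Z)+\eps/2$. Passing to a subsequence along which the empirical measures $\frac1n\sum_{j=0}^{n-1}\delta_{S^j(y)}$ converge weak$^*$, we obtain some $\nu'\in\MS(Z)$; by the inequality \eqref{ineq:B-bound-on-bar-rho} applied in the system $(X^\Z,S)$ (where $y$ is generic for $\nu'$ along that subsequence — here I would invoke the version of \eqref{ineq:B-bound-on-bar-rho} from \cite{BCKO} that only needs generic behaviour along a subsequence, or alternatively first replace $y$ by a genuinely generic point without increasing $\rZB(x,\cdot)$ using lower semicontinuity of $\rZB$), we get $\brZ(\mu,\nu')\le \rZB(x,y)\le \rZBH(Y,Z)+\eps/2$. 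Finally $\nu'$ need not be ergodic, so I apply the equality proved in the first part (or directly the strong $\brZ$-density of ergodic measures) to find an ergodic $\nu\in\MSe(Z)$ with $\brZ(\nu',\nu)<\eps/2$, and the triangle inequality for $\brZ$ gives $\brZ(\mu,\nu)\le\rZBH(Y,Z)+\eps$. Taking $\sup$ over $\mu\in\MSe(Y)$, then the symmetric bound, then letting $\eps\to0$ yields $\brZH(\MSe(Y),\MSe(Z))\le\rZBH(Y,Z)$.

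\textbf{Main obstacle.} The routine parts are the Hausdorff-of-closures lemma and the weak$^*$ compactness argument. The delicate point is the transfer step: converting a pointwise Besicovitch bound $\rZB(x,y)\le r$ into a $\brZ$-bound $\brZ(\mu,\nu)\le r$ between the invariant measures generated by $x$ and $y$. Inequality \eqref{ineq:B-bound-on-bar-rho} is exactly designed for this, but one must be careful that it is stated for points that are \emph{genuinely} generic, whereas a point $y$ merely close to $x$ in $\rZB$ may fail to be generic; the fix is either to choose the subsequence realizing the $\limsup$ in $\rZB(x,y)$ and simultaneously making the empirical measures of both $x$ and $y$ converge, and apply the subsequential form of \eqref{ineq:B-bound-on-bar-rho}, or to first note (using that $\rZBH(Y,Z)$ is an infimum over $y\in Z$ together with $\brZ$-density of ergodic measures in $\MS(Z)$) that the infimum defining $\rZBH$ is essentially attained, up to $\eps$, by a generic point. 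I would present the subsequential argument, as it is cleanest and avoids circular dependence on density.
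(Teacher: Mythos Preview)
Your argument for the equality contains a genuine error. You assert that for any subsystem $W$ the set $\MSe(W)$ is $\brZ$-dense in $\MS(W)$, justifying this by saying that every $\mu\in\MS(W)$ can be $\brZ$-approximated by finite convex combinations of ergodic measures. But approximation by convex combinations is not density of $\MSe(W)$ itself, and in fact $\MSe(W)$ is \emph{not} $\brZ$-dense in $\MS(W)$ in general. A concrete counterexample: in the full two-shift take $\mu=\tfrac12(\delta_{0^\infty}+\delta_{1^\infty})$. For any ergodic $\nu$, decomposing an optimal joining ergodically shows $\brZ(\mu,\nu)\ge \tfrac12\brZ(\delta_{0^\infty},\nu)+\tfrac12\brZ(\delta_{1^\infty},\nu)\ge\tfrac12\brZ(\delta_{0^\infty},\delta_{1^\infty})=\tfrac12$. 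So your step ``$\MS(W)=\overline{\MSe(W)}^{\brZ}$'' fails, and with it the Hausdorff-of-closures reduction. The same error reappears in the last step of your inequality argument, where you ``upgrade'' $\nu'\in\MS(Z)$ to an ergodic $\nu$ at small $\brZ$-cost.

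The paper's route avoids this entirely. Instead of density, it uses Lemma~\ref{lem:pseudogeneric-dbar}: for \emph{ergodic} $\mu$ and any subsystem $Z$, one has $\brZ(\mu,\MSe(Z))=\brZ(\mu,\MS(Z))$. The proof (as in \cite{KKK}) takes an optimal $\lambda\in J(\mu,\nu)$ and decomposes it ergodically; since $\mu$ is ergodic, each ergodic component projects to $\mu$ on the first coordinate and to some ergodic $\nu_\omega\in\MSe(Z)$ on the second, and at least one component satisfies $\int\rZ\,d\lambda_\omega\le\brZ(\mu,\nu)$. This, together with a convexity/measurable-selection argument for the non-ergodic side, gives Lemma~\ref{lem:erg-eq}. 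Your inequality argument up to the construction of $\nu'\in\MS(Z)$ is fine and matches the paper's; the missing idea is to replace the false density step by this ergodic decomposition of the optimal joining.
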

The proof follows the same lines as the proof of \cite[Prop. 15]{KKK}.
\begin{proof}
    Let $\varepsilon>0$ with $\rZBH(Y,Z)<\varepsilon$ and let $\mu\in\MSe(Y)$. We show that there is $\nu\in\MS(Z)$ with $\bar\pi(\mu,\nu)<\varepsilon$, and Lemma~\ref{lem:pseudogeneric-dbar} and Lemma~\ref{lem:erg-eq} will then imply that 
    \[\brZH(\MSe(Y),\MSe(Z))=\brZH(\MS(Y),\MS(Z))<\varepsilon.\] Let $y\in Y$ be a generic point for $\mu$, let $z\in Z$ with $\rZB(y,z)<\varepsilon$ and proceed as in the proof of \cite[Theorem 7.2]{BCKO} using \eqref{ineq:B-bound-on-bar-rho} on the way.
\end{proof}

Similarly as in \cite[Lemma 14]{KKK} we discover that for every pair of subsystems $Y$ and $Z$ of $X^\Z$ 
the Hausdorff distance $\brZH(\MS(Y),\MS(Z))$ equals the distance $\brZH(\MSe(Y),\MSe(Z))$ between the sets of ergodic invariant measures.
%Our next observation is the equality \[\barrH(\MS(Y),\MS(Z))=\barrH(\MSe(Y),\MSe(Z)),\]
%that holds for any subsystems $Y$ and $Z$ of $X^\Z$. 
\begin{lemma}\label{lem:erg-eq}
If $Y$ and $Z$ are subsytems of $X^\Z$, then
\[\brZH(\MS(Y),\MS(Z)) = \brZH(\MSe(Y),\MSe(Z)).\]
%Furthermore, both quantities are equal
%\[
%\max\left(
% \sup_{\mu\in\Mse(X)}\dbarm(\mu,\Ms(Y)),\ \sup_{\nu\in\Mse(Y)}\dbarm(\nu,\Ms(X)) \right).
%\]
\end{lemma}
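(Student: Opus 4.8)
The plan is to deduce this from Proposition~\ref{prop:dbar-le-dund} together with two soft facts: that every invariant measure is a $\brZ$-limit of finite convex combinations of its ergodic components (or at least lies in the $\brZ$-closure of the ergodic measures it dominates), and that $\brZ$ interacts well with convex combinations. Since one inclusion is trivial --- $\MSe(Y)\subseteq\MS(Y)$ and likewise for $Z$, so computing a Hausdorff distance over a smaller set can only make distances larger, giving $\brZH(\MS(Y),\MS(Z))\le\brZH(\MSe(Y),\MSe(Z))$ for the sup-over-$Y$ part, and symmetrically --- the content is the reverse inequality $\brZH(\MSe(Y),\MSe(Z))\le\brZH(\MS(Y),\MS(Z))$.

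First I would fix $\varepsilon>0$ with $\brZH(\MS(Y),\MS(Z))<\varepsilon$ and take an arbitrary $\mu\in\MSe(Y)$; since $\mu\in\MS(Y)$ there is $\nu\in\MS(Z)$ with $\brZ(\mu,\nu)<\varepsilon$, but $\nu$ need not be ergodic, so I must replace it by an ergodic measure on $Z$ without increasing the distance much. The key step is the ergodic decomposition argument: write $\nu=\int \nu_\omega\,d\mathbb{P}(\omega)$ with $\nu_\omega$ ergodic and supported on $Z$; then, using that $\brZ$ is given by an infimum over joinings, one shows $\brZ(\mu,\nu)\ge\int\brZ(\mu,\nu_\omega)\,d\mathbb{P}(\omega)$ (a joining of $\mu$ and $\nu$ disintegrates over the ergodic components of $\nu$ into joinings of $\mu$ with the $\nu_\omega$, and the cost integral splits accordingly, as in the standard $\dbar$ theory and in \cite{BCKO}). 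Hence there is a positive-measure set of $\omega$ with $\brZ(\mu,\nu_\omega)<\varepsilon$, and picking one such $\nu_\omega\in\MSe(Z)$ handles the point $\mu$. Running the symmetric argument with the roles of $Y$ and $Z$ exchanged, and observing that for the other direction (starting from $\mu\in\MS(Y)$) one first approximates $\mu$ in $\brZ$ by ergodic measures in $\MSe(Y)$ before invoking the already-established $\MSe$-to-$\MSe$ control, yields $\brZH(\MSe(Y),\MSe(Z))\le\varepsilon$; letting $\varepsilon\downarrow\brZH(\MS(Y),\MS(Z))$ finishes the proof.

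I expect the main obstacle to be the disintegration-of-joinings inequality $\brZ(\mu,\nu)\ge\int\brZ(\mu,\nu_\omega)\,d\mathbb{P}(\omega)$ and, dually, the fact that a measure $\mu$ lies in the $\brZ$-closure of $\{\,$ergodic measures appearing in its decomposition$\,\}$ --- concretely, that finite convex combinations $\sum_i t_i\mu_i$ of ergodic measures are $\brZ$-dense among invariant measures and that $\brZ(\sum t_i\mu_i,\sum t_i\nu_i)\le\sum t_i\brZ(\mu_i,\nu_i)$. Both are standard in the symbolic $\dbar$ setting; here they should follow from the joining description of $\brZ$ in \cite{BCKO} and the gluing lemma, exactly as \cite[Lemma 14]{KKK} does for shift spaces, so the proof is mostly a matter of citing \cite{BCKO} for these structural properties of $\brZ$ and then assembling them with Proposition~\ref{prop:dbar-le-dund}.
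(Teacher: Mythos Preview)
Your disintegration-of-joinings argument for the inequality $\brZH(\MSe(Y),\MSe(Z))\le\brZH(\MS(Y),\MS(Z))$ is correct and is exactly the content of the paper's Lemma~\ref{lem:pseudogeneric-dbar}: for ergodic $\mu$, an optimal joining with $\nu$ disintegrates over the ergodic components $\nu_\omega$ of $\nu$, giving $\brZ(\mu,\nu)\ge\int\brZ(\mu,\nu_\omega)\,d\mathbb{P}(\omega)$ and hence $\brZ(\mu,\MSe(Z))=\brZ(\mu,\MS(Z))$. The gap is in the reverse inequality. Your claim that $\brZH(\MS(Y),\MS(Z))\le\brZH(\MSe(Y),\MSe(Z))$ is ``trivial'' by monotonicity of the Hausdorff construction is false: shrinking $\MS(Y)$ to $\MSe(Y)$ decreases the outer supremum, but simultaneously shrinking $\MS(Z)$ to $\MSe(Z)$ increases each point-to-set distance, and the two effects do not compare. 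Your fallback of approximating an arbitrary $\mu\in\MS(Y)$ in $\brZ$ by ergodic measures, or by finite convex combinations thereof, is also unjustified: ergodicity is $\brZ$-closed, so a non-ergodic $\mu$ is never a $\brZ$-limit of ergodic measures, and finite convex combinations of ergodic measures are in general only weak$^*$-dense, not $\brZ$-dense, since there is no reason for the decomposition map $\omega\mapsto\mu_\omega$ to be $\brZ$-continuous on large sets.

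The correct route (this is \cite[Lemma 14]{KKK}, which the paper simply transports) uses the convexity inequality you already wrote down, but applied directly rather than through an approximation step: given $\mu=\int\mu_\omega\,d\mathbb{P}\in\MS(Y)$ with $\mu_\omega\in\MSe(Y)$, choose a measurable selection $\omega\mapsto\nu_\omega\in\MSe(Z)$ with $\brZ(\mu_\omega,\nu_\omega)<\brZH(\MSe(Y),\MSe(Z))+\varepsilon$, set $\nu=\int\nu_\omega\,d\mathbb{P}\in\MS(Z)$, and conclude $\brZ(\mu,\nu)\le\int\brZ(\mu_\omega,\nu_\omega)\,d\mathbb{P}<\brZH(\MSe(Y),\MSe(Z))+\varepsilon$. (A side remark: invoking Proposition~\ref{prop:dbar-le-dund} here would be circular, since its proof appeals to Lemma~\ref{lem:erg-eq}.)
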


We establish equality in the same way as \cite[Lemma 14]{KKK}, that is, we need the following lemma, which is an analogue of \cite[Lemma 13]{KKK} and has the same proof as the one presented in \cite{KKK}.

\begin{lemma}\label{lem:pseudogeneric-dbar}
If $Y\subseteq X^\Z$ is a subsystem and $\mu \in \MSeXZ$, then
$\brZ(\mu,\MSe(Y))=\brZ(\mu,\MS(Y))$.
\end{lemma}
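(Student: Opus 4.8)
The plan is to establish the only nontrivial inequality, $\brZ(\mu,\MS(Y))\ge\brZ(\mu,\MSe(Y))$, since the reverse is immediate from $\MSe(Y)\subseteq\MS(Y)$. Fix $\nu\in\MS(Y)$. By the properties of $\brZ$ recalled after \eqref{def:rho-bar} (applied to the tds $(X^\Z,S)$ with the metric $\rZ$ in place of $(X,\rho)$), the set $J(\mu,\nu)$ of $S\times S$-invariant joinings is nonempty, compact and convex, and the continuous convex functional $\lambda\mapsto\int\rZ(x,y)\,d\lambda(x,y)$ attains its infimum at some $\lambda\in J(\mu,\nu)$, so $\int\rZ\,d\lambda=\brZ(\mu,\nu)$. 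Since $\nu$ is concentrated on the closed $S$-invariant set $Y$, the second marginal of $\lambda$ is too, hence $\lambda(X^\Z\times Y)=1$.

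Next I would pass to the ergodic decomposition $\lambda=\int\lambda_\omega\,d\mathbb{Q}(\omega)$ of $\lambda$ with respect to $S\times S$, where $\mathbb{Q}$-a.e.\ $\lambda_\omega$ is an $(S\times S)$-ergodic measure. From $\int\lambda_\omega(X^\Z\times Y)\,d\mathbb{Q}(\omega)=1$ and $\lambda_\omega(X^\Z\times Y)\le1$ we get $\lambda_\omega(X^\Z\times Y)=1$ for $\mathbb{Q}$-a.e.\ $\omega$. Because $\projX$ and $\projY$ are factor maps from $(X^\Z\times X^\Z,S\times S)$ (they intertwine $S\times S$ with $S$), the marginals $(\projX)_*\lambda_\omega$ and $(\projY)_*\lambda_\omega$ are ergodic for $\mathbb{Q}$-a.e.\ $\omega$; write $\nu_\omega=(\projY)_*\lambda_\omega$, which then lies in $\MSe(Y)$. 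Applying $\projX$ to the decomposition gives $\mu=\int(\projX)_*\lambda_\omega\,d\mathbb{Q}(\omega)$, a representation of $\mu$ as the barycentre of a probability measure on the ergodic measures of $(X^\Z,S)$; since $\mu$ is itself ergodic, uniqueness of the ergodic decomposition forces $(\projX)_*\lambda_\omega=\mu$ for $\mathbb{Q}$-a.e.\ $\omega$. Hence $\lambda_\omega\in J(\mu,\nu_\omega)$ for $\mathbb{Q}$-a.e.\ $\omega$, so $\int\rZ\,d\lambda_\omega\ge\brZ(\mu,\nu_\omega)\ge\brZ(\mu,\MSe(Y))$. Integrating in $\omega$ yields $\brZ(\mu,\nu)=\int\bigl(\int\rZ\,d\lambda_\omega\bigr)\,d\mathbb{Q}(\omega)\ge\brZ(\mu,\MSe(Y))$, and taking the infimum over $\nu\in\MS(Y)$ finishes the argument. (Equivalently, one may first extract the identity $\brZ(\mu,\nu)=\int\brZ(\mu,\nu_\omega)\,d\mathbb{Q}(\omega)$ for ergodic $\mu$ — which follows from the argument above together with a measurable selection of optimal joinings, and is recorded for $\bar\rho$ in \cite{BCKO} — and then pick an ergodic component $\nu_\omega$ with $\brZ(\mu,\nu_\omega)\le\brZ(\mu,\nu)$.)

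The routine measure-theoretic points — measurability of $\omega\mapsto\lambda_\omega$ and of the marginal maps, the almost-everywhere concentration of $\lambda_\omega$ on $X^\Z\times Y$, and the Fubini manipulations — are standard. The genuinely load-bearing step, and the one I expect to require the most care, is the claim that the first marginals of the ergodic components of $\lambda$ coincide with $\mu$ for $\mathbb{Q}$-a.e.\ $\omega$: this combines the fact that coordinate projections send ergodic joinings to ergodic measures with the uniqueness of the ergodic decomposition of the ergodic measure $\mu$. All of these ingredients (the stated properties of $\brZ$, joinings, and their ergodic decompositions) are available from the excerpt and from \cite{BCKO}, so the proof runs exactly along the lines of \cite[Lemma~13]{KKK}.
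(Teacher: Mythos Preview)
Your argument is correct and is precisely the ergodic-decomposition-of-an-optimal-joining proof of \cite[Lemma~13]{KKK}, which is exactly what the paper invokes (it gives no independent proof and simply states that the lemma ``has the same proof as the one presented in \cite{KKK}''). The key step you flag---that the first marginals of the ergodic components of $\lambda$ must $\mathbb{Q}$-a.e.\ equal the ergodic measure $\mu$---is handled correctly via uniqueness of the ergodic decomposition.
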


Our main observation is that $\brZH$-limit of simplices of invariant measures preserves density of ergodic measures.

\begin{thm}\label{thm:gen-scheme}
Let $(Y_k)_{k=1}^\infty$ and $Y$ be subsystems of $(X^\Z,S)$ such that
\[\brZH(\MS(Y_k),\MS(Y)) \to 0 \quad\text{ as } k \to \infty.\]
If $\MSe(Y_k)$ is weak$^*$ dense in $\MS(Y_k)$ for every $k \in \N$, then the ergodic measures are weak$^*$ dense in $\MS(Y)$.
\end{thm}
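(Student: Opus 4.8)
The plan is to combine the two ingredients already assembled in the excerpt: the metric inequality $\brZ(\mu,\MS(Y_k))\le\brZH(\MS(Y_k),\MS(Y))$ coming from the Hausdorff construction, and the fact (Proposition~\ref{prop:dbar-le-dund}, Lemma~\ref{lem:erg-eq}, Lemma~\ref{lem:pseudogeneric-dbar}) that on the level of sets of invariant measures the $\brZ$-Hausdorff distance does not see the difference between the full simplex and its ergodic part. Concretely, fix $\mu\in\MS(Y)$ and $\eps>0$; I want to produce an ergodic measure $\nu\in\MSe(Y)$ with $\brZ(\mu,\nu)$ small, hence (since $\brZ$ dominates a metric for the weak$^*$ topology, as recorded in the $\bar\rho$ section) with $\nu$ weak$^*$ close to $\mu$.

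First I would choose $k$ large enough that $\brZH(\MS(Y_k),\MS(Y))<\eps/3$. By definition of the Hausdorff pseudometric there is then $\mu_k\in\MS(Y_k)$ with $\brZ(\mu,\mu_k)<\eps/3$. Next, using the hypothesis that $\MSe(Y_k)$ is weak$^*$ dense in $\MS(Y_k)$, I would want an ergodic measure of $Y_k$ that is $\brZ$-close to $\mu_k$ — but weak$^*$ density is a priori weaker than $\brZ$-density, so this step needs care; see the obstacle paragraph below. Granting it for the moment, pick $\nu_k\in\MSe(Y_k)$ with $\brZ(\mu_k,\nu_k)<\eps/3$. Finally, apply Proposition~\ref{prop:dbar-le-dund} in the form $\brZH(\MSe(Y_k),\MSe(Y))=\brZH(\MS(Y_k),\MS(Y))<\eps/3$ together with Lemma~\ref{lem:pseudogeneric-dbar} to obtain $\nu\in\MSe(Y)$ with $\brZ(\nu_k,\nu)<\eps/3$. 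The triangle inequality for $\brZ$ gives $\brZ(\mu,\nu)<\eps$, and since $\brZ$-convergence implies weak$^*$ convergence, letting $\eps\to0$ shows $\mu$ is a weak$^*$ limit of ergodic measures of $Y$.

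The main obstacle is the passage from weak$^*$ density of $\MSe(Y_k)$ in $\MS(Y_k)$ to $\brZ$-approximation of $\mu_k$ by ergodic measures of $Y_k$, since the $\brZ$ topology is strictly finer than weak$^*$ in general. I expect this is resolved exactly as the analogous point in \cite{KKK}: one does not literally need $\brZ$-density, only that the $\brZ$-distance from $\mu_k$ to the \emph{set} $\MSe(Y_k)$ is controlled, and this set is $\brZ$-closed (as noted in the excerpt, closedness of the ergodic part follows from \cite{BCKO}). The key is that for a subsystem $Y_k$ the ergodic decomposition is $\brZ$-continuous in the appropriate sense: any $\mu_k\in\MS(Y_k)$ is an integral of its ergodic components, each lying in $\overline{\MSe(Y_k)}^{\,\mathrm{weak}^*}$, and one argues as in \cite[Lemma 13, Lemma 14]{KKK} that $\brZ(\mu_k,\MSe(Y_k))=0$ whenever $\MSe(Y_k)$ is weak$^*$ dense in the simplex of a subshift. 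Combining this with the identity $\brZ(\mu_k,\MS(Y_k))=0$ (trivially) and Lemma~\ref{lem:pseudogeneric-dbar}-type reasoning closes the gap. An alternative, cleaner route avoiding any refinement of the density hypothesis is to invoke Proposition~\ref{prop:dbar-le-dund} first — which already gives $\brZ(\mu,\MSe(Y_k))\le\brZ(\mu,\MS(Y))+\brZH(\MS(Y),\MS(Y_k))$ type bounds — and only then use weak$^*$ density together with the fact that every weak$^*$-open neighbourhood of an ergodic measure meets $\MSe(Y_k)$; but making the constants match still relies on the $\brZ$-closedness of $\MSe(Y_k)$, so the substance is the same.
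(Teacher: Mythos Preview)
Your three-step chain (approximate $\mu\in\MS(Y)$ by $\mu_k\in\MS(Y_k)$, then by $\nu_k\in\MSe(Y_k)$, then by $\nu\in\MSe(Y)$) is exactly the right skeleton, and you correctly flag the middle step as the obstacle. But your proposed resolution of that obstacle is not valid. The claim that ``$\brZ(\mu_k,\MSe(Y_k))=0$ whenever $\MSe(Y_k)$ is weak$^*$ dense'' is neither proved in \cite{KKK} nor true in general: weak$^*$ density does not imply $\brZ$-density. Lemma~\ref{lem:pseudogeneric-dbar} only says that for an \emph{ergodic} $\mu$ one has $\brZ(\mu,\MSe(Y))=\brZ(\mu,\MS(Y))$; applied with $\mu=\mu_k\in\MS(Y_k)$ it gives nothing, since $\mu_k$ need not be ergodic and the right-hand side is $0$ trivially anyway. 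Your ``alternative route'' at the end still insists on getting $\brZ$-control of the middle step and invokes $\brZ$-closedness of $\MSe(Y_k)$, which is equally beside the point.

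The actual fix, and the one used in \cite{KKK}, is much simpler: you are only asked to prove \emph{weak$^*$} density, so run the triangle inequality in a metric compatible with the weak$^*$ topology (say the Wasserstein metric $W_1$, for which $W_1\le\brZ$ as noted in the paper). Given a weak$^*$-open $U\ni\mu$, pick $\eps>0$ so that the $W_1$-ball of radius $3\eps$ around $\mu$ lies in $U$. Choose $k$ with $\brZH(\MS(Y_k),\MS(Y))<\eps$; then there is $\mu_k\in\MS(Y_k)$ with $\brZ(\mu,\mu_k)<\eps$, hence $W_1(\mu,\mu_k)<\eps$. By weak$^*$ density of $\MSe(Y_k)$ there is $\nu_k\in\MSe(Y_k)$ with $W_1(\mu_k,\nu_k)<\eps$. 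By Lemma~\ref{lem:erg-eq}, $\brZH(\MSe(Y_k),\MSe(Y))<\eps$, so there is $\nu\in\MSe(Y)$ with $\brZ(\nu_k,\nu)<\eps$, hence $W_1(\nu_k,\nu)<\eps$. Now $W_1(\mu,\nu)<3\eps$ and $\nu\in U$. The point is that the first and third links give $\brZ$-closeness (hence $W_1$-closeness), while the middle link only gives $W_1$-closeness---and that is all you need.
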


The proof is (up to necessary, mostly notational) changes identical with the one in \cite{KKK}.
%\end{conj}

\section{Main results}

\begin{prop}\label{prop:approximation-for-vsp}
    Suppose that $(X,T)$ is a surjective topological dynamical system and has the vague specification property. Then
    \[
        \brZH(\MS(\XT^{1/n}),\MS(\XT)) \to 0 \quad\text{ as } n \to \infty.
    \]
\end{prop}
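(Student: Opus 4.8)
The plan is to exploit the equivalence between vague specification and the asymptotic average shadowing property, together with the Hausdorff-metric reduction already available from Proposition~\ref{prop:dbar-le-dund}. Since $\XT \subseteq \XT^{1/n}$ for every $n$, one half of the Hausdorff distance is automatic: every invariant measure of $\XT$ is an invariant measure of $\XT^{1/n}$, so $\sup_{\mu\in\MS(\XT)}\brZ(\mu,\MS(\XT^{1/n}))=0$. Thus it suffices to control the other direction, namely to show that for every $\eps>0$ there is $N$ such that for all $n\ge N$ and every $\mu\in\MSe(\XT^{1/n})$ there is $\nu\in\MS(\XT)$ with $\brZ(\mu,\nu)<\eps$; by Lemma~\ref{lem:erg-eq} (and Lemma~\ref{lem:pseudogeneric-dbar}) it is enough to do this for ergodic $\mu$. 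By Proposition~\ref{prop:dbar-le-dund}, this will follow once we show the cleaner statement $\rZBH(\XT^{1/n},\XT)\to 0$, i.e.\ that every biinfinite $\tfrac1n$-chain is $\rZB$-close (equivalently, by Lemma~\ref{lem:coordinate-dynamical-equiv}, $\crB$-close) to some genuine orbit of $(X,T)$, with the closeness tending to $0$ uniformly as $n\to\infty$.

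The heart of the argument is therefore: \emph{vague specification implies that biinfinite $\delta$-chains can be shadowed on average by true orbits, with the average error controlled by $\delta$}. I would phrase this as an auxiliary lemma. Given $x=(x_k)_{k\in\Z}\in\XT^{\delta}$, the forward sequence $(x_k)_{k\ge 0}$ satisfies $\rho(T(x_k),x_{k+1})\le\delta$ for all $k\ge0$; this is a $2\delta$-pseudo-orbit, hence in particular (trivially, since the per-step error is uniformly at most $\delta$) a $2\delta$-average pseudo-orbit and, after the standard trick of interpolating with asymptotically vanishing perturbations, close to an asymptotic average pseudo-orbit. Here I would invoke the chain-mixing + average shadowing characterization (Thm.~3.6 of \cite{KKO14}, quoted above) or directly the asymptotic average shadowing property: for the asymptotic average pseudo-orbit one gets a point $z\in X$ with $\frac1n\sum_{j=0}^{n-1}\rho(T^j(z),x_j)\to 0$, which handles genuine asymptotic average pseudo-orbits; for a plain $\delta$-pseudo-orbit the same theorem yields $z$ with $\limsup_n \frac1n\sum_{j=0}^{n-1}\rho(T^j(z),x_j)<\eps$ once $\delta$ is small enough relative to $\eps$. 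The orbit $y=(T^k(z))_{k\in\Z}\in\XT$ (using surjectivity to extend backwards, or working with the natural extension directly) then satisfies $\crB(x,y)<\eps$. Quantitatively, for each $\eps>0$ we obtain $\delta(\eps)>0$ such that every biinfinite $\delta(\eps)$-chain is within $\eps$ in $\crB$ of a point of $\XT$; choosing $N$ with $1/N<\delta(\eps)$ gives $\rZBH(\XT^{1/n},\XT)\le\eps$ for $n\ge N$ (the reverse inclusion $\XT\subseteq\XT^{1/n}$ makes the other side of the Hausdorff distance zero), which is exactly what we need.

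Assembling the pieces: fix $\eps>0$, take $N$ as above, and for $n\ge N$ apply Proposition~\ref{prop:dbar-le-dund} to get $\brZH(\MS(\XT^{1/n}),\MS(\XT))=\brZH(\MSe(\XT^{1/n}),\MSe(\XT))\le\rZBH(\XT^{1/n},\XT)\le\eps$. Letting $\eps\to0$ gives the claim. The main obstacle I anticipate is the quantitative bookkeeping in the auxiliary shadowing lemma: passing from a biinfinite $\delta$-chain to something the (asymptotic) average shadowing hypothesis directly applies to, making sure the error is genuinely controlled by $\delta$ uniformly (not just "goes to $0$ for a fixed chain"), and handling the two-sided (biinfinite) nature — one must shadow on the whole line, not just $\N_0$, which is where passing to the natural extension, or a compactness/diagonal argument selecting $z$ from finite pieces, is needed. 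A secondary subtlety is the interplay between $\rZB$ and $\crB$: one should be careful that the uniform equivalence in Lemma~\ref{lem:coordinate-dynamical-equiv} is quantitative enough that $\crB$-smallness of order $\eps$ translates to $\rZB$-smallness of order controlled by $\eps$, so that the $\eps\to0$ limit still goes through.
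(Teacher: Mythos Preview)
Your proposal is correct and follows essentially the same route as the paper: reduce via Proposition~\ref{prop:dbar-le-dund} to $\rZBH(\XT^{1/n},\XT)\to 0$, pass through Lemma~\ref{lem:coordinate-dynamical-equiv} to the coordinatewise pseudometric $\crB$, and then invoke the chain-mixing + average-shadowing characterisation (Theorem~3.6 of \cite{KKO14}) to $\eps'$-shadow on average any $\delta$-pseudo-orbit by a true orbit, with $\delta=\delta(\eps')$ chosen uniformly.

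Your two anticipated obstacles are in fact non-issues. The ``uniform in $\delta$'' control is precisely the content of \cite[Theorem~3.6]{KKO14}: it hands you a single $\delta>0$ working for \emph{every} $\delta$-pseudo-orbit, so no diagonal or compactness argument is needed. The ``two-sided'' worry dissolves once you notice that both $\crB$ and $\rZB$ are defined via $\limsup$ over nonnegative indices only (see \eqref{eqn:bes1} and \eqref{eqn:bes2}); hence any backward extension of the shadowing point $z$ to an element of $\XT$ (which exists by surjectivity) leaves the Besicovitch distance unchanged. This is exactly what the paper does: it simply picks ``any element $(x_k)_{k\in\Z}$ of $\XT$ with $x_0=x$'' and is done.
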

\begin{proof}
    Fix $\eps>0$. By Proposition~\ref{prop:dbar-le-dund}, it is enough to find $\delta>0$ such that $\rZBH(\XT,\XT^\delta)\le\eps$. But first we use Lemma~\ref{lem:coordinate-dynamical-equiv} to find $\eps'>0$ such that for every $x=(x_k)_{k\in\Z}$ and in $X^\Z$ if the coordinatewise Besicovitch pseudodistance satisfies $\rB(x,y)<\eps'$ then $\rZB(x,y)<\eps$.
    
    Surjectivity and vague specification (which by \cite{CT} is equivalent to the asymptotic average shadowing property) guarantee that $(X,T)$ has the average shadowing property \cite[Theorem 3.7]{KKO14} and is chain mixing \cite[Lemma 3.1]{KKO14}. By \cite[Theorem  3.6]{KKO14}, it then follows that there is $\delta>0$ such that every $\delta$-pseudo-orbit (in particular, every element $(y_k)_{k\in\Z}$ of $\XT^\delta$ restricted to non-negative coordinates) is $\eps'$-shadowed on average by some $x\in X$, which means that taking any element $(x_k)_{k\in\Z}$ of $\XT$ such that $x_0=x$ (and hence $x_k=T^k(x)$ for $k\ge 0$) we
    have
    \[
    \rB((x_k)_{k\in\Z},(y_k)_{k\in \Z}) = \limsup_{N\to\infty} \frac{1}{N}\sum_{k=0}^{N-1}\rho(T^k(x),y_k)<\eps'.
    \]
    By our choice of $\eps'$ we conclude that $\rZB((x_k)_{k\in\Z},(y_k)_{k\in \Z})<\eps$. It follows that we have found $\delta>0$ such that for every $(y_k)_{k\in\Z}\in\XT^\delta$ we have $\rZB(\XT,(y_k)_{k\in\Z})<\eps$. Since $\XT\subseteq\XTd$ we conclude that $\rZBH(\XT,\XT^\delta)\le \eps$.
    \end{proof}

Finally, we prove our main theorem.

\begin{thm}\label{thm:main}
    If $(X,T)$ is a surjective topological dynamical system with the vague specification property, then the ergodic measures are dense among all invariant measures. 
\end{thm}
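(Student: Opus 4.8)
The plan is to assemble the pieces already developed in the paper into a short chain of reductions. The goal is to conclude density of ergodic measures in $\MTX$ from the approximation scheme, so I would proceed as follows.

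First I would invoke Proposition~\ref{prop:approximation-for-vsp}: since $(X,T)$ is surjective and has the vague specification property, the subsystems $\XT^{1/n}$ of $X^\Z$ satisfy $\brZH(\MS(\XT^{1/n}),\MS(\XT))\to 0$ as $n\to\infty$. Next I would check the hypotheses of Theorem~\ref{thm:gen-scheme} for the sequence $Y_n=\XT^{1/n}$ and $Y=\XT$: the convergence is exactly what Proposition~\ref{prop:approximation-for-vsp} gives, so it remains to see that each $\MSe(\XT^{1/n})$ is weak$^*$ dense in $\MS(\XT^{1/n})$. For this, note that vague specification implies chain mixing of $(X,T)$ (this is used inside Proposition~\ref{prop:approximation-for-vsp}, via \cite[Lemma 3.1]{KKO14}), and then Proposition~\ref{defspec}, part \ref{ii}, tells us that $(\XT^{1/n},S)$ has the periodic specification property for every $n$. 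By the classical results of Sigmund \cite{Sig70,Sig74} (or Parthasarathy \cite{Par61}), a topological dynamical system with the periodic specification property has its ergodic measures (indeed its periodic-orbit measures) weak$^*$ dense in the simplex of invariant measures. Hence the hypothesis of Theorem~\ref{thm:gen-scheme} is satisfied, and we conclude that the ergodic measures are weak$^*$ dense in $\MS(\XT)$.

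Finally I would transfer this back to $(X,T)$ itself. By Lemma~\ref{lem:xtmeasures}, the simplices $\MTX$ and $\MS(\XT)$ are affinely homeomorphic; since an affine homeomorphism maps extreme points to extreme points and is a homeomorphism for the weak$^*$ topologies, it carries the dense set of ergodic measures of $\MS(\XT)$ onto the ergodic measures of $\MTX$, which are therefore weak$^*$ dense. This completes the proof.

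I do not expect a genuine obstacle here, since the substantive work is done in the earlier sections; the only point requiring a little care is the appeal to the classical specification-implies-density theorem for the approximating systems $(\XT^{1/n},S)$, so I would state that citation explicitly rather than leave it implicit, and I would also make sure the reduction through the natural extension is phrased so that the weak$^*$ density (not just some $\brZ$-statement) is what gets transported by Lemma~\ref{lem:xtmeasures}.
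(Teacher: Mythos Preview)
Your proposal is correct and follows essentially the same route as the paper: chain mixing gives periodic specification for each $(\XT^{1/n},S)$ via Proposition~\ref{defspec}, hence density of ergodic measures in each $\MS(\XT^{1/n})$; Proposition~\ref{prop:approximation-for-vsp} provides the $\brZH$-convergence, Theorem~\ref{thm:gen-scheme} transfers density to $\MS(\XT)$, and Lemma~\ref{lem:xtmeasures} pulls this back to $\MTX$. The only cosmetic differences are that the paper cites \cite[Corollary~25]{KKK} rather than Sigmund directly for the specification-implies-density step, and leaves the invocation of Theorem~\ref{thm:gen-scheme} implicit where you spell it out.
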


\begin{proof}
    If $(X,T)$ is a surjective topological dynamical system with the vague specification property then $(X,T)$ is chain mixing by applying first \cite[Theorem 4.5]{CT} then \cite[Lemma 3.1]{KKO14}. Using Proposition \ref{defspec} we see that for $n\geq 1$, the space $(X_T^{1/n},S)$ has the periodic specification property that implies the set of ergodic measures are dense in $\MS(\XT^{1/n})$, see \cite[Corollary 25]{KKK}. Now, we use the Proposition \ref{prop:approximation-for-vsp} to get that the set of ergodic measures dense in $\MS(\XT)$. By Lemma~\ref{lem:xtmeasures}, we are done.
\end{proof}

\bibliographystyle{ieeetr}
\bibliography{references}
\end{document}